\newtheorem{theorem}{Theorem} 
\newtheorem{lemma}{Lemma}
\newtheorem{remark}{Remark}
\numberwithin{equation}{section}
\begin{document}

\title{Average shifted convolution sum for $\mathrm{GL}(d_1)\times \mathrm{GL}(d_2)$}

\author[E. A. Molla] {Esrafil Ali Molla}
\address{Esrafil Ali Molla\\
	Indian Statistical Institute, Kolkata\\
	Stat-Math Unit\\
	203, B.T Road, Baranagar, West Bengal~700108\\
	India}
\email{esrafil.math@gmail.com}

\date{\today}

\subjclass[2020]{Primary: 11F30, 11F66; Secondary: 11M41}

\keywords{Shifted convolution sums, Automorphic forms, Delta method}

\begin{abstract}
We study the average shifted convolution sum  
$$
B(d_1,d_2;H,N):= \frac{1}{H} \sum_{h \sim H} \sum_{n \sim N} A_{\pi_1}(n)\, A_{\pi_2}(n+h),
$$
where $A_{\pi_i}(n)$ denotes the  Fourier coefficients of a Hecke--Maass cusp form $\pi_i$ for $\mathrm{SL}(d_i,\mathbb{Z})$ with $d_i\ge 4$, $i=1,2$. We establish a nontrivial power-saving bound of $B(d_1,d_2;H,N)$ for the range of the shift $H\ge N^{1-\frac{4}{d_1+d_2}+\varepsilon}$ for any $\varepsilon>0$. For the cases $d_1 = d_2 + 1$ and $d_1 = d_2$, our result improves a result that can be derived from a theorem of Friedlander and Iwaniec \cite{FI05}. In particular, when $d_1 = d_2$, we reach the critical threshold $H\ge N^{1-2/d+\varepsilon}$ such that any further improvement in this range yields a subconvexity bound for the corresponding standard $L$-function in the $t$-aspect.
\end{abstract}

\maketitle

\tableofcontents

\section{Introduction and main results}

The study of shifted convolution sums is of central importance in analytic number theory and has a long history. Since Selberg's seminal work \cite{Selberg65}, several authors have extensively investigated shifted convolution sums involving $\mathrm{GL}(2)$ Fourier coefficients. Any nontrivial bound of the shifted convolution sum has significant implications in the subconvexity problems and the equidistribution aspects of quantum unique ergodicity; for more details, see \cite{Blom04}, \cite{BlHa08}, \cite{DFI93}, \cite{DFI94}, \cite{Holo09}, \cite{Michel04} and \cite{Sarnak01}. Though several strong results have been established in the rank-two and rank-three settings, that is, for the $\mathrm{\mathrm{GL}}(2)$ and $\mathrm{\mathrm{GL}}(3)$ cases, the problem has remained wide open in higher rank. To the best of our knowledge, this article is the first to address the shifted convolution problem in the general $\mathrm{\mathrm{GL}}(d)$ setting for $d \ge 4$. Pitt \cite{Pitt95} has considered a similar sum with $\tau_3(n)$, the ternary divisor function, defined to be the Dirichlet coefficients of $\zeta^3(s)$ in the half-plane $\Re(s) > 1$, and $\lambda_f(n)$ denotes the  $n$-th Fourier coefficients of an $\mathrm{SL}(2,\mathbb{Z})$ Hecke cusp cusp form $f$. For any $0<r<N^{1/24}$ and any $\varepsilon>0$, he established that
$$
\sum\limits_{n\le N} \tau_3(n)\lambda_f(rn-1)\ll_{f,\varepsilon} N^{1-1/72+\varepsilon}.
$$
Recently, Munshi \cite{Munshi13} replaced $\tau_3(n)$ by the Fourier coefficients of an $\mathrm{SL}(3,\mathbb{Z})$ Hecke–Maass form. Utilizing a variant of Jutila's circle method with an important new input, factorizable moduli, allowed him to more effectively balance the diagonal and off-diagonal contributions, thereby obtaining the first non-trivial bound $N^{1 - 1/26 + \varepsilon}$ for the below sum. 
$$
\sum_{m=1}^{\infty} A_\pi(1,m)\,\lambda_f(m + h)\,V\left(\frac{m}{X}\right) \ll_{\pi,f,\varepsilon} X^{1 - \frac{1}{26} + \varepsilon}.
$$
Following Munshi's approach, Xi \cite{Xi18} achieved a strong power saving $N^{1 - 1/22 + \varepsilon}$ by exploring the bilinear structure of the exponential sum.

Baier, Browning, Marasingha, and Zhao \cite{BBMZ12} considered the average of the shifted convolution sum with $\lambda_f(n)$ replaced by $\tau_3(n)$, which is relevant for the sixth moment of the zeta function. Harun and Singh considered both the average and weighted average versions of shifted convolution sums for $\mathrm{GL}(3) \times \mathrm{GL}(2)$ in \cite{Harun24a}, and for $\mathrm{GL}(3) \times \mathrm{GL}(3)$ in \cite{Harun24b}. Recently, Dasgupta, Leung, and Young improved the lower bound $H \ge N^{1/4 +\varepsilon}$ for $\mathrm{GL}(3)$ Fourier coefficients  (see Theorem~1.2 of \cite{DLY24}). Subsequently, Pal and Pal \cite{PalPal} refined the result of Dasgupta \emph{et al.} \cite{DLY24}, improving the lower bound of $H$ to $N^{1/6+\varepsilon}.$ 

We consider an average of a shifted convolution sum for the general higher rank analogue, namely,
\begin{equation} \label{def of B(d_1,d_2;H,N)}
B(d_1,d_2;H,N):= \frac{1}{H} \sum_{h \sim H} \sum_{n \sim N} A_{\pi_1}(n)\, A_{\pi_2}(n+h),
\end{equation}
where $A_{\pi_i}(n)$ denotes the  Fourier coefficients of a Hecke--Maass cusp form $\pi_i$ for $\mathrm{SL}(d_i,\mathbb{Z})$ with $d_i\ge 4$, for $i=1,2$. Throughout the sequel, we assume without loss of generality that $d_1 \ge d_2$. By applying Cauchy’s inequality together with standard bounds for the $\mathrm{GL}(d_1)$ and $\mathrm{GL}(d_2)$ Fourier coefficients derived from Rankin–Selberg theory, we obtain the trivial estimate
$$
B(d_1,d_2;H,N)\ll N^{1+\varepsilon},
$$
where the implied constant depends only on $\pi_1,\, \pi_2$ and $\varepsilon$.

In this article, we prove a non-trivial bound for $B(d,d;H, N)$ provided that $H \ge N^{1-2/d+\varepsilon}$ if $d=d_1=d_2$. For the unequal-degree case $d_1\neq d_2$, we establish a nontrivial bound of $B(d_1,d_2;H,N)$ provided that $H\ge N^{1-\frac{4}{d_1+d_2}+\varepsilon}$. If $d_1\ge d_2$, the smallest shift for which a nontrivial bound for $B(d_1,d_2;H,N)$ can be established is $H > N^{(d_2-1)/(d_2+1)}$, based on a result of Friedlander and Iwaniec \cite{FI05} (see below subsection \ref{subsec:Friedlander Iwaniec result}). The shifted convolution sums are typically analyzed using either the circle method or the spectral method (see \cite{Michel07}). Our approach employs a version of the circle method, in particular the delta method of Duke Friedlander Iwniec \cite{DFI93}. To the best of our knowledge, this is the first application of the delta method to shifted convolution sums involving higher-degree Fourier coefficients associated with the Hecke--Maass cusp form for $\mathrm{SL}(d,\mathbb{Z})$ with $d\ge 4$. Before stating our main theorems, we fix some notation.

\subsection{Notations}
Throughout the sequel, we denote by $\varepsilon$ a small positive constant, whose value may change from one occurrence to the next. The functions $V$, $U$, and $W$ will refer to compactly supported smooth weight functions whose definitions may change depending on the context. The notation $n \sim N$ means that $N < n \le 2N$. We write $X \asymp Y$ to mean $N^{-\varepsilon} < |X/Y| < N^{\varepsilon}$ whenever $Y \neq 0$. For quantities $X$ and $Y$ depending on a variable $x$, we write $X = O(Y)$ or $X \ll Y$ if $|X| \le CY$ for some constant $C$. 

\subsection{Main results}
Our first main result concerns the case in which $\pi_{1}$ and $\pi_{2}$ 
are Hecke--Maass cusp forms for $\mathrm{SL}(d,\mathbb{Z})$ of the same degree $d\ge 4$.
\begin{theorem}\label{Main Th 1}
Let $d\ge 4$. For any $\varepsilon>0$, we have 
$$
B(d,d;H,N)\ll N^{\varepsilon}(N^{d-1}{H^{-d}}),
$$
where the implied constant depends only on $\pi_1, \pi_2$ and $\varepsilon$. In particular, this bound yields a power-saving improvement over the trivial estimate provided that for $H \ge N^{1-2/d+\varepsilon}$.
\end{theorem}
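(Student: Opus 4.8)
The plan is to detect the relation $m=n+h$ by the delta method, move the $GL(d_2)$ coefficient onto a new free variable $m$, and then apply Voronoi summation to \emph{both} the $GL(d_1)$ and the $GL(d_2)$ sums; extracting cancellation from the second coefficient as well is exactly the input absent from the Friedlander--Iwaniec argument recalled above. Concretely, after attaching smooth dyadic weights I would write
$$
B(H,N)\,H=\sum_{h}W\!\left(\tfrac hH\right)\sum_{n,m}A_{\pi_1}(n)A_{\pi_2}(m)\,V\!\left(\tfrac nN\right)V\!\left(\tfrac mN\right)\delta(m-n-h),
$$
with $m$ now free of size $N$ (so $m-n-h\ll N$), and apply the delta method with a parameter $Q$ to be chosen. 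This produces the modulus sum $q\le Q$, the complete sum $\sum_{a(q)}^{*}$, the $\xi$-integral against the weight $g(q,\xi)$, and the additive phases $e(\pm an/q),\,e(\mp am/q),\,e(-ah/q)$ together with $e((m-n-h)\xi/(qQ))$.

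I would then apply the $GL(d_1)$ Voronoi formula to the $n$-sum and the $GL(d_2)$ Voronoi formula to the $m$-sum. Each turns its sum into a dual sum (over $n_2$, resp.\ $m_2$) whose length is governed by $q$, $N$ and $Q$ through the conductor, an arithmetic factor in $\bar a$, and an oscillatory integral transform built from the degree-$d_i$ Bessel kernel. The $a$-sum then collapses to a Kloosterman sum, $\sum_{a(q)}^{*}e\!\big((-ah+\bar a(\pm n_2\mp m_2))/q\big)=S(-h,\pm n_2\mp m_2;q)$. The crucial new step is the sum over $h\sim H$: opening the Kloosterman sum and summing over $h$ gives
$$
\sum_{h}W\!\left(\tfrac hH\right)e\!\left(\tfrac{-h\xi}{qQ}\right)S(-h,k;q)\ \ll\ q^{\,1+\varepsilon}
$$
uniformly in $k$ and $\xi$, i.e.\ a saving of essentially $\min(H,q)$ over the trivial $\sum_h|S(-h,k;q)|\ll Hq^{1/2+\varepsilon}$ — this is the gain over Friedlander--Iwaniec, who work with a single coefficient and so cannot exploit averaging in $h$. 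The degenerate case $\pm n_2\equiv\mp m_2\!\pmod q$, in which $S$ is a Ramanujan sum, satisfies the same bound because $\sum_{h\sim H}c_q(h)\ll q^{1+\varepsilon}$, and the term $q=1$ (the ``main term'' of the delta method) is treated separately.

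It then remains to evaluate the $\xi$-integral and the two Voronoi integral transforms by stationary phase, recombine the resulting phases, and estimate the surviving sum over $q\le Q$, $n_2$ and $m_2$ by Weil's bound (already used above) together with the Ramanujan-on-average bound of Lemma~\ref{lemma Rama Bound} for $A_{\pi_1}$ and $A_{\pi_2}$. Balancing the contributions by the choice of $Q$ should yield $B(H,N)\,H\ll N^{\varepsilon}\,N^{(d_1+d_2)/2-1}H^{-\left((d_1+d_2)/2-1\right)}$, which for $d_1=d_2=d$ is the claimed $B(H,N)\ll N^{\varepsilon}N^{d-1}H^{-d}$, nontrivial precisely for $H\ge N^{1-2/d+\varepsilon}$.

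The hard part will be the $GL(d)$ Voronoi step and the ensuing oscillatory-integral analysis: for $d\ge4$ the Bessel kernel is a $d$-fold Mellin--Barnes integral with several saddle points, and one must track carefully how the combined oscillation of the delta-method $\xi$-integral and of the two Voronoi transforms localizes the dual variables — in particular, pinning down the correct $H$-dependence of the effective dual lengths and of the transforms, which is the source of the power-of-$H$ saving in the final bound. Keeping all the error terms under control — from stationary phase, from the divisor structure of the moduli in the $GL(d)$ Voronoi formula, and from the small-$q$ ranges — and choosing $Q$ so that the diagonal and off-diagonal contributions are simultaneously admissible, is where most of the technical work lies.
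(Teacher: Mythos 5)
Your framework (delta method detecting $m=n+h$, dual summation applied to both coefficients, exploitation of the $h$-average) matches the paper's in outline, but the proposal misplaces the source of the saving and leaves the actual proof undone. With the natural choice $Q\asymp N^{1/2}$ and the standing assumption $H\ge N^{1/2+\varepsilon}$ (which is implied by $H\ge N^{1-2/d+\varepsilon}$ for $d\ge4$), the smooth $h$-sum $\sum_h W(h/H)\,e(-ah/q - xh/(qQ))$ is not merely $O(q^{1+\varepsilon})$ after opening the Kloosterman sum: it is $O(N^{-A})$ for every $q>1$, because for $(a,q)=1$ with $q>1$ the frequency $a/q + x/(qQ)$ stays at distance $\gg 1/q \gg 1/Q \gg N^{\varepsilon}/H$ from the integers. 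The paper makes this precise by Poisson summation in $h$ \emph{before} any dual summation: only the zero frequency survives, and the resulting $\sum_{\beta\bmod q}e(-\beta a/q)$ vanishes unless $q=1$. Consequently the entire Kloosterman/Weil apparatus you describe --- including the $GL(d)$ Voronoi formula for general modulus, whose arithmetic factor for $d\ge4$ is in any case a hyper-Kloosterman-type sum with divisor structure rather than $S(-h,\pm n_2\mp m_2;q)$ --- acts only on terms that are negligible, while the one term you set aside (``$q=1$, treated separately'') carries the whole sum.

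The theorem is then proved entirely inside that $q=1$ term, and this is precisely the part your proposal does not supply. There, ``Voronoi'' reduces to the functional equation of $L(s,\pi)$ together with Stirling's formula and two stationary-phase evaluations; the factor $\hat{W}(xH/Q)$ produced by the $h$-average restricts $|x|\ll Q/H$, and it is this restriction that forces the dual lengths down to $\tilde{N}\asymp (N/H)^{d}/N = N^{d-1}H^{-d}$; a further integration by parts in $x$ then localizes the dual variables to $|n-m|\ll N^{\varepsilon}\tilde{N}H/N$, after which Cauchy--Schwarz and Lemma~\ref{lemma Rama Bound} yield $B(H,N)\ll N^{\varepsilon}\tilde{N}=N^{\varepsilon}N^{d-1}H^{-d}$. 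In other words, the power of $H$ in the theorem comes from the archimedean localization of $x$, the resulting short dual sums, and the near-diagonal constraint --- not from cancellation of Kloosterman sums over $h$. As written, the proposal asserts the final exponent without the computation that produces it, so the stated bound is not established.
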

We next consider the case in which $\pi_{i}$ are Hecke--Maass cusp forms for $\mathrm{SL}(d_{i},\mathbb{Z})$ for $i=1,2$, with $d_{1}> d_{2}$. The result can be derived from a result of Friedlander and Iwaniec (see below subsection~\ref{subsec:Friedlander Iwaniec result}) already implies a nontrivial bound of $B(d_1,d_2;H,N)$ provided that 
$H \;\ge\; N^{\frac{d_{2}-1}{\,d_{2}+1\,}+\varepsilon}.$
In what follows, we focus on the complementary regime 
$$
H \le N^{\frac{d_{2}-1}{\,d_{2}+1\,}}.
$$
Theorem~\ref{Main Th 1} generalizes to this setting with essentially the same strength for higher-rank Hecke--Maass forms of different degrees. The next theorem provides the corresponding statement in this more general framework.
\begin{theorem} \label{Main Th 2} 
Let $d_1,\, d_2\ge 4$ be such that $d_1>d_2$ and suppose that $H \le N^{(d_2-1)/(d_2+1)}$. Then for any $\varepsilon>0$, we have 
$$
B(d_1,d_2;H,N)\ll N^\varepsilon N^{\frac{(d_1+d_2-2)}{2}}H^{-\frac{(d_1+d_2)}{2}},
$$
where the implied constant depends only on $\pi_1,\, \pi_2$ and $\varepsilon$.  In particular, this bound yields a power-saving improvement over the trivial estimate provided that $H\ge N^{1-\frac{4}{d_1+d_2}+\varepsilon}$. 
\end{theorem}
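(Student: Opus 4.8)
We follow the strategy of the proof of Theorem~\ref{Main Th 1}, now tracking the two degrees $d_1$ and $d_2$ separately. After inserting smooth weights $U(n/N)$ and $W(h/H)$ (so that the sharp cutoffs $n\sim N$, $h\sim H$ are removed up to a negligible error), write
\[
B(H,N)=\frac1H\sum_{n}A_{\pi_1}(n)\,U(n/N)\,T(n),\qquad T(n):=\sum_{m}A_{\pi_2}(m)\,W\!\Big(\frac{m-n}{H}\Big),
\]
so that $T(n)$ is the smoothed shift sum $S(n,H)$; this is where the delta method enters, as the expansion of $\sum_m A_{\pi_2}(m)\,\delta(m-n-h)$ summed against $W(h/H)$. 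The first step is to apply the $\mathrm{GL}(d_2)$ Voronoi summation formula to $T(n)$. A first-derivative (stationary-phase) analysis of the resulting Hankel transform — the weight has width $H$ but is centred at scale $N$ — shows that, up to a negligible error, $T(n)$ is a sum over a dual variable $\tilde m\ll N^{\varepsilon}N^{d_2-1}/H^{d_2}$, which is $\ge 1$ since $H\le N^{(d_2-1)/(d_2+1)}<N^{(d_2-1)/d_2}$; more precisely $T(n)=\sum_{\tilde m}A_{\pi_2}(\tilde m)\,\mathcal W_n(\tilde m)\,e\big(\pm d_2(\tilde m n)^{1/d_2}\big)$ with smooth bounded $\mathcal W_n\ll N^{\varepsilon}H^{(d_2+1)/2}N^{-(d_2-1)/2}$.

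Substituting into $B$ and interchanging summations (the $1/H$ is absorbed), one has to bound $\sum_{\tilde m}A_{\pi_2}(\tilde m)\mathcal W_n(\tilde m)\sum_n A_{\pi_1}(n)\,e\big(\pm d_2(\tilde m n)^{1/d_2}\big)U(n/N)$. The inner $n$-sum is a $\mathrm{GL}(d_1)$ coefficient sum twisted by an algebraic phase of order $1/d_2$; applying the $\mathrm{GL}(d_1)$ Voronoi formula and treating the resulting integral by stationary phase (the competing phases of orders $1/d_2$ and $1/d_1$ have their stationary point in the support exactly when $n'\asymp\tilde m^{d_1/d_2}N^{(d_1-d_2)/d_2}$, which runs up to $\ll N^{\varepsilon}N^{d_1-1}/H^{d_1}\ge 1$), one arrives, after a dyadic decomposition in $\tilde m$ in which the top range $\tilde m\asymp N^{d_2-1}/H^{d_2}$ turns out to dominate, at
\[
B(H,N)\;\ll\;N^{\varepsilon}\,H^{(d_1+d_2-1)/2}N^{-(d_1+d_2-3)/2}\,\Big|\sum_{\tilde m}\sum_{n'}A_{\pi_2}(\tilde m)A_{\pi_1}(n')\,e\big(\Phi(\tilde m,n')\big)\Big|,
\]
where $\Phi$ is an explicit algebraic phase coupling $\tilde m$ and $n'$, the $\tilde m$-sum has length $\asymp N^{d_2-1}/H^{d_2}$, and for each $\tilde m$ the $n'$-sum has length $\asymp\tilde m^{d_1/d_2}N^{(d_1-d_2)/d_2}$.

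To bound this double sum I would apply Cauchy--Schwarz in $\tilde m$ to remove $A_{\pi_2}(\tilde m)$ (using $\sum_{\tilde m\le \tilde L_2}|A_{\pi_2}(\tilde m)|^{2}\ll \tilde L_2^{1+\varepsilon}$ with $\tilde L_2=N^{d_2-1}/H^{d_2}$, Lemma~\ref{lemma Rama Bound}), open the square, and estimate $\sum_{\tilde m}e\big(\Phi(\tilde m,n_1')-\Phi(\tilde m,n_2')\big)$. The diagonal $n_1'=n_2'$ together with the near-diagonal $|n_1'-n_2'|\ll N^{d_1-2}/H^{d_1-1}$ — on which this $\tilde m$-sum has no cancellation — supply the main contribution, while for larger separations the first-derivative test provides just enough cancellation that the total is again of that order; one obtains $\sum_{\tilde m}\big|\sum_{n'}A_{\pi_1}(n')e(\Phi)\big|^2\ll N^{\varepsilon}N^{2d_1+d_2-4}/H^{2d_1+d_2-1}$. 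Feeding this back through Cauchy--Schwarz gives $B(H,N)\ll N^{\varepsilon}N^{(d_1+d_2-2)/2}H^{-(d_1+d_2)/2}$, which beats the trivial bound $N^{1+\varepsilon}$ exactly when $H\ge N^{1-4/(d_1+d_2)+\varepsilon}$.

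The main obstacle is this last estimation. Compared with the equal-degree Theorem~\ref{Main Th 1}, two features make it more delicate: the two Voronoi steps create competing oscillations of the incommensurable orders $1/d_1$ and $1/d_2$, so the stationary point of $\Phi$, its Hessian, and the amplitude all have to be computed carefully for the asymmetric pair $(d_1,d_2)$ and the error terms in the two stationary-phase expansions controlled; and one must verify that the near-diagonal width $N^{d_1-2}/H^{d_1-1}$ — which is $\ge 1$ precisely because $H\le N^{(d_2-1)/(d_2+1)}<N^{(d_1-2)/(d_1-1)}$ — produces exactly the claimed size while the genuinely off-diagonal terms, handled by the first-derivative test on the $\tilde m$-sum, are no larger. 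The contributions from the cutoff-smoothing, from the lower-order terms of the Voronoi formulae, and from the non-generic ranges $(n',q)>1$ of the Hankel arguments are routine and dominated by the above; the hypothesis $H\le N^{(d_2-1)/(d_2+1)}$ is used precisely to keep all the relevant dual sums non-empty and the near-diagonal mechanism in force.
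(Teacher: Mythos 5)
Your proposal is correct in its numerology and reaches the paper's bound, but it reorganizes the argument in a few genuine ways, so a comparison is worth recording. The paper runs the delta method with moduli $q\le Q$, applies Poisson in $h$ to kill all $q>1$, dualizes \emph{both} the $n$- and $m$-sums via the functional equation against the \emph{linear} phases $e(\pm\cdot x/Q)$ (Lemma~\ref{Lemma dual sums}), and only then couples the two dual variables by stationary phase in the $x$-integral; you instead absorb the $h$-average into a smoothed shift sum $T(n)$ (which is exactly what survives of the delta method at $q=1$), apply $\mathrm{GL}(d_2)$ Voronoi to $T(n)$ first, and then dualize the $n$-sum against the resulting \emph{nonlinear} twist $e(\pm d_2(\tilde m n)^{1/d_2})$ — a Mellin/functional-equation/stationary-phase computation that is morally the same but must be carried out for an algebraic rather than linear phase. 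The more substantive divergence is the endgame: the paper applies Cauchy--Schwarz in the \emph{longer} dual variable $n\sim\Tilde N=N^{d_1-1}/H^{d_1}$, keeps the $A_{\pi_2}$'s outside, and extracts the diagonal $|m_1-m_2|\ll (H/N)\Tilde M$ by Poisson summation in $n$; you Cauchy--Schwarz in the \emph{shorter} variable $\tilde m\sim N^{d_2-1}/H^{d_2}$ and extract the diagonal $|n_1'-n_2'|\ll (H/N)\Tilde N=N^{d_1-2}/H^{d_1-1}$ by the first-derivative test on the now coefficient-free $\tilde m$-sum. I checked that your prefactor $H^{(d_1+d_2-1)/2}N^{-(d_1+d_2-3)/2}$ and your diagonal count $N^{2d_1+d_2-4}H^{-(2d_1+d_2-1)}$ are both consistent with the paper's (they reproduce the paper's intermediate ``trivial'' bound $N^{(d_1+d_2-1)/2}H^{-(d_1+d_2+1)/2}$ when the double sum is estimated trivially, and the final bound $N^{(d_1+d_2-2)/2}H^{-(d_1+d_2)/2}$ after the diagonal extraction), so the two mirrored Cauchy--Schwarz choices happen to give identical exponents here. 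Two small points of care in your version: the claim that the off-diagonal terms are ``no larger'' needs the verification that $|\partial_{\tilde m}(\Phi(\tilde m,n_1')-\Phi(\tilde m,n_2'))|\ll N^{(3-d_2)/(d_2+1)}<1$ throughout (so the first-derivative test applies without further subdivision, and the dyadic sum over $|n_1'-n_2'|$ converges to the near-diagonal size up to a logarithm) — this does hold under $H\le N^{(d_2-1)/(d_2+1)}$; and the mention of ``non-generic ranges $(n',q)>1$'' is vacuous here, since after the zero-frequency reduction everything lives at level $q=1$. Neither issue affects correctness.
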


\begin{remark}
The lower bound obtained for the shift $H$ in Theorem~\ref{Main Th 2} improves upon the bound $H \;\ge\; N^{\,\frac{d_{2}-1}{d_{2}+1}+\varepsilon}$ only in the case $d_1=d_2+1$.
\end{remark}

\subsection{A result due to Friedlander and Iwaniec}\label{subsec:Friedlander Iwaniec result} Without loss of generality, we assume that $d_1\ge d_2$. The expression in \eqref{def of B(d_1,d_2;H,N)} can be rewritten as 
\begin{equation*}
B(d_1,d_2;H,N)=\frac{1}{H}\sum\limits_{n\sim N} A_{\pi_1}(n) S(n,H), 
\end{equation*}
where 
$$
S(n,H)=\sum\limits_{m\le n+2H} A_{\pi_2}(m)-\sum\limits_{m\le n+H} A_{\pi_2}(m).
$$
Using the Cauchy-Schwarz inequality and then applying Lemma~\ref{lemma Rama Bound} (see below), one can derive the following estimate
\begin{equation} \label{after cauchy}
B(d_1,d_2;H,N)\ll \frac{N^{(1+\varepsilon)/2}}{H} \Big( \sum\limits_{n\sim N} |S(n,H)|^2\Big)^{1/2}.
\end{equation} 
By using  Lemma~\ref{FI sum bbdd} (see below), we get
\begin{equation}\label{S bdd}
|S(n,H)|\ll N^{\frac{d_2-1}{d_2+1}+\varepsilon}.
\end{equation}
Combining \eqref{after cauchy} and \eqref{S bdd}, and after renaming $\varepsilon$ if necessary, we get
\begin{equation} \label{Friedlander Iwaniec result}
B(d_1,d_2;H,N)\ll N^{\varepsilon}N^{1+\frac{d_2-1}{d_2+1}}H^{-1}
\end{equation}
Here, the implied constant depends only on $\pi$ and $\varepsilon$. 
In particular, the above bound yields a power-saving improvement over the trivial estimate of $B(d_1,d_2;H,N)$ provided that $H \ge N^{\frac{d_2-1}{d_2+1}+\varepsilon}$. 

\begin{remark}
In the above proof, we obtain a nontrivial power saving for $B(d_1,d_2;H,N)$, provided that the shift satisfies $H \ge N^{\frac{d_2-1}{d_2+1}+\varepsilon}$. The argument relied on obtaining cancellation over a single Fourier coefficient. In this article, we employ the delta method, which enables us to achieve cancellation over two Fourier coefficients. This additional cancellation allows us to derive a stronger result, improving the lower bound for the shift to $H\ge N^{1-\frac{4}{d_1+d_2}+\varepsilon}$.     
\end{remark}

\subsection{Moment of degree-$d$ $L$-function}
We consider the second moment problem 
\begin{equation*}
    M_f(T)=\int\limits_{T}^{2T}\big|L(1/2+it,f) \big|^2dt,
\end{equation*}
where $f$ is a Hecke-Maass cusp form for $\mathrm{SL}(d,\mathbb{Z})$. 
The generalized Lindel\"of hypothesis proposes that 
$$
M_f(T) \ll T^{1+\varepsilon}.
$$
To obtain the trivial bound  $M_f(T) \ll T^{d/2 + \varepsilon}$, we first apply the approximate functional equation (see Theorem~5.3, \cite{IwaKowBook}), which allows us to truncate the sum over $n$ in the integrand at $N = T^{d/2 + \varepsilon}$. Expanding the absolute square and evaluating the resulting oscillatory integral, we obtain essentially the following.
\begin{equation} \label{Mf bdd}
M_f(T)\ll T^{1-d/2} \Big| \sum\limits_{h \sim T^{d/2-1}} \,\, \sum\limits_{n\sim T^{d/2}} A_\pi(n)A_\pi(n+h)\Big|.
\end{equation} 
By applying the Cauchy--Schwarz inequality together with the Ramanujan bound on average (see below Lemma~\ref{lemma Rama Bound}), the double sum can be bounded above by $T^{d-1}$. Consequently, we obtain the upper bound 
$$ 
M_f(T) \ll T^{d/2+\varepsilon}.
$$
Hence, we obtain 
\begin{equation*}
\sum\limits_{n\sim T^{d/2}} \frac{A_\pi(n)}{n^{1/2+it}}\ll T^{d/4+\varepsilon}.
\end{equation*}
This implies that
$$
L(1/2+it,f)\ll t^{d/4+\varepsilon}.
$$
\begin{remark}
In the above subsection, we obtain the convexity bound for high-degree $L$-functions. On the other hand, if Theorem~\ref{Main Th 1} holds for $H \geq N^{1 - \frac{2}{d} - \eta}$ for some positive $\eta$, then applying it to \eqref{Mf bdd}, yields a subconvexity bound for the corresponding $L$-function. Thus, we are precisely at the boundary case, and any further improvement of our result would yield a subconvexity bound for the automorphic $L$-function in the $t$-aspect through the approach based on the upper bound of the second moment of automorphic $L$-functions. This method has its origin in the classical ideas of Hardy and Littlewood \cite{HardyLittle1916}, whose pioneering work laid the foundation for the moment approach to subconvexity problems. Despite substantial progress over the decades, deriving subconvexity purely from moment estimates remains a deep and largely unresolved problem in higher rank settings $\mathrm{GL}(d)$ for $d\ge 4$. In spectacular recent work,
 Nelson \cite{Paul2021} claimed a remarkable general subconvexity bound for $\mathrm{GL}(d)$ automorphic $L$-functions in the $t$-aspect.
\end{remark}

\subsection{The description of our method}
Let us now briefly describe our method. To separate the oscillations appearing in the sum $B(d_1,d_2;H,N)$ in \eqref{B(d_1,d_2;H,N) sum before delta}, we apply the delta method, which yields the expression in \eqref{After Application of delta}. A trivial estimation gives $B(d_1,d_2;H,N)\ll N^{2+\varepsilon}$, so we need to save $N$ (and slightly more).
Applying the Poisson summation formula to the $h$-sum yields a saving of $\frac{H}{N^{1/2}}$. A small trick used in \eqref{B sum dual} forces $q=1$, which allows us to save the full modulus $Q = N^{1/2}$. Thus, the total saving at this stage is
$$
N^{1/2} \cdot \frac{H}{N^{1/2}} = H.
$$

To prove Theorem~\ref{Main Th 1}, we consider the case $d = d_1 = d_2$. After applying the functional equation to both the $n$-sum and the $m$-sum in \eqref{B sum q=1}, we obtain the saving
$$
\frac{N}{(N/H)^{d/2}} \cdot \frac{M}{(M/H)^{d/2}}
\asymp \frac{N^{2}}{(N/H)^{d}}.
$$
Next, by analyzing the $x$-integral appearing in the sum \eqref{Th1: B(d_1,d_2;H,N) with x}, we save $N/H$. Altogether, the total saving is
$$
H \cdot \frac{N^{2}}{(N/H)^{d}} \cdot \frac{N}{H}
= N^{3-d} H^{d}.
$$
Hence, we obtain a nontrivial power-saving bound of $B(d_1,d_2;H,N)$ provided that the total saving satisfies
$$N^{3-d} H^{d} \ge N^{1+\varepsilon}.
$$
This is equivalent to the condition $$H \ge N^{1 - 2/d + \varepsilon}.$$

To prove Theorem~\ref{Main Th 2}, we consider the unequal case $d_1 > d_2$. After applying the functional equation to both the $n$-sum and the $m$-sum in \eqref{B sum q=1}, we obtain the saving
$$
\frac{N}{(N/H)^{d_1/2}} \cdot \frac{M}{(M/H)^{d_2/2}} \asymp \frac{N^2}{(N/H)^{\frac{d_1+d_2}{2}}}.
$$
Next, by estimating the $x$-integral appearing in the sum \eqref{2. B(d_1,d_2;H,N) voronoi}, we gain a further saving $(N/H)^{1/2}$ at \eqref{2.B sum last}. To get the strength of Theorem~\ref{Main Th 1}, we need to save $(N/H)^{1/2}$. To this end, we apply the Cauchy--Schwarz inequality (see \eqref{eq with omega}), followed by the Poisson summation formula on the $n$-sum, which yields \eqref{after poisson Sigma}. We note that
$$
\text{dual length}
= \frac{\text{conductor}}{\text{initial}}
= \frac{N/H}{\Tilde{N}}.
$$
The natural choice \eqref{Th2: condition: H/N} forces $\Tilde{N} > N/H$, and hence only the zero frequency contributes. Moreover, since $\Tilde{M} > N/H$, we obtain the saving
$$
\bigl( \min\{ \tilde{M},\, N/H \} \bigr)^{1/2} = (N/H)^{1/2}.
$$
Altogether, the total saving is
$$
H \cdot \frac{N^2}{(N/H)^{\frac{d_1+d_2}{2}}} \cdot
(N/H)^{1/2} \cdot
(N/H)^{1/2}
=
N^{3 - \frac{d_1 + d_2}{2}}\, H^{\frac{d_1 + d_2}{2}}.
$$
Hence we obtain a nontrivial power-saving bound of $B(d_1,d_2;H,N)$ provided that the total saving satisfies
$$
N^{3 - \frac{d_1 + d_2}{2}}\, H^{\frac{d_1 + d_2}{2}}
\ge N^{1+\varepsilon}.
$$
This is equivalent to the condition
$$
H \ge N^{1 - \frac{4}{d_1 + d_2} + \varepsilon}.
$$

\section{Preliminaries}
In this section, we begin with some basic properties of $\mathrm{SL}(d,\mathbb{Z})$ automorphic forms. For background material on Hecke--Maass cusp forms, we refer the reader to \cite{Gold06b}.
\subsection{Hecke–Maass cusp forms for $\mathrm{SL}(d, \mathbb{Z})$}
Let $\pi$ be a Hecke--Maass cusp form for $\mathrm{SL}(d,\mathbb{Z})$ with the spectral parameter $(\alpha_1, \dots, \alpha_d)\in \mathbb{C}^{d}$. Let $A_\pi$ be the Fourier-Whittaker coefficient of $\pi$. The conjugate of $A$ is defined as $\overline{A_\pi(n_1,\dots, n_{d-1})}= A_\pi(n_{d-1},\dots, n_{1})$. We also assume $\pi$ is a Hecke eigenform with a  Fourier coefficient $A_\pi(1, \dots, 1)$ to be $1$. The Dual Maass form of $\pi$ is denoted by $\tilde{\pi}$. Let $A_{\tilde{\pi}}(n_1,\dots, n_d)$ be the Fourier-Whittaker coefficients of $\tilde{\pi}$, then 
$$
A_{\tilde{\pi}}(n_1,\dots,n_{d-1})=\overline{A_{\pi}(n_1,\dots,n_{d-1})}=A_{\pi}(n_{d-1},\dots,n_1).
$$
Throughout the sequel, for simplicity of notation, we write $A_\pi(n)$ in place of $A_{\pi}(1,\ldots,1,n)$ and $A_{\tilde{\pi}}(n)$ in place of $A_{\tilde{\pi}}(n,1,\ldots,1)$. 
We now define 
\begin{equation} \label{DEF. of gamma,0,1} 
\gamma_{\delta_0} (s):=i^{-d\delta_0}\pi^{-d(1/2-s)}\prod_{j=1}^d 
\frac{\Gamma\!\left(\frac{1- s +\delta_0 - \overline{\alpha_j}}{2}\right)}
{\Gamma\!\left(\frac{s+\delta_0-\alpha_j}{2}\right)}.
\end{equation}
For even Maass forms we define $\delta_0=0$ and for odd Maass forms we define $\delta_0=1$. 
We now define 
\begin{equation*}
\Omega(y):=\frac{1}{2\pi i} \int\limits_{\Re (s)=\sigma} (y)^s \Tilde{\omega}(s) \gamma_{\delta_0}(s) \, ds,
\end{equation*}
where $\sigma<1+\max\limits_{1\le j\le d} \{\Re(\alpha_j)\}$ and  $\Tilde{\omega}(s)=\int\limits_{0}^{\infty}\omega(y)y^{s-1}dy$ is the Mellin transform of $\omega$. Note that the Luo--Rudnick--Sarnak bound says that
$$
\Re(\alpha_j) \le \frac{1}{2}-\frac{1}{d^2+1},
$$
see \cite[Theorem 12.5.1]{Gold06b}.
The standard $L$-function $\pi$ is define by
$$
L(s,\pi)=\sum\limits_{n=1}^{\infty}\frac{A_\pi(n)}{n^s}.
$$
This $L$-function satisfies the functional equation
\begin{equation}\label{functional equ}
L(s,\pi)=\gamma_{\delta_0}(s)L(1-s,\tilde{\pi}).
\end{equation}
The following bound is well-known and follows from standard properties of the Rankin--Selberg $L$-function. 
\begin{lemma}[Ramanujan conjecture on average] \label{lemma Rama Bound}
For any $\varepsilon > 0$, we have
$$
\sum_{n \leq X} \left| A_\pi(n) \right|^2 \;\ll_{\pi,\varepsilon}\; X^{1+\varepsilon},
$$
where the implied constant depends on $\pi$ and $\varepsilon$.
\end{lemma}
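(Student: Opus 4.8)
The plan is to derive the bound from the analytic properties of the Rankin--Selberg convolution $L(s,\pi\times\tilde\pi)$, following the standard contour-integration argument. First I would recall that the Dirichlet series $L(s,\pi\times\tilde\pi)=\sum_{n\ge 1} c_\pi(n) n^{-s}$ has nonnegative coefficients satisfying $c_\pi(n)\ge \sum_{m\mid n^{\infty}}\text{(suitable index)} \ge |A_\pi(1,\dots,1,n)|^2$ (more precisely, the coefficient at $n$ dominates $|A_\pi(n)|^2$ because the Fourier coefficient $A_\pi(1,\ldots,1,n)$ appears in the Hecke expansion of the Rankin--Selberg series with a positive weight; this is where the Hecke-eigenform normalization $A_\pi(1,\ldots,1)=1$ is used). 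Consequently it suffices to bound $\sum_{n\le X} c_\pi(n)$.

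Next I would invoke the known analytic continuation of $L(s,\pi\times\tilde\pi)$: it extends to a meromorphic function on $\mathbb C$ with a simple pole at $s=1$ (since $\tilde\pi$ is the dual of $\pi$, equivalently $\pi\not\cong\pi\otimes|\det|^{it}$ for the cuspidal case this is the Jacquet--Shalika theory), is holomorphic and nonvanishing on $\Re(s)=1$, and satisfies a functional equation relating $s$ to $1-s$ with a gamma factor of polynomial growth in vertical strips, together with the standard convexity bound $L(\sigma+it,\pi\times\tilde\pi)\ll (1+|t|)^{C}$ in any fixed vertical strip. Then one applies a Perron-type formula: writing $\sum_{n\le X} c_\pi(n)$ as a smoothed or truncated contour integral of $L(s,\pi\times\tilde\pi)\,X^s/s$, shifting the contour from $\Re(s)=1+\varepsilon$ to $\Re(s)=\varepsilon$, picking up the residue $\mathrm{Res}_{s=1} L(s,\pi\times\tilde\pi)\cdot X$, and estimating the shifted integral by $O(X^{\varepsilon}\cdot(1+|t|)^{C})$ truncated at height $T$, balanced appropriately. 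This yields $\sum_{n\le X} c_\pi(n)\ll_{\pi,\varepsilon} X^{1+\varepsilon}$, and hence the claimed bound for $\sum_{n\le X}|A_\pi(n)|^2$.

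Alternatively, and more cleanly, I would cite the fact that nonnegativity of the coefficients together with the existence of a simple pole at $s=1$ and convergence for $\Re(s)>1$ immediately gives, by a Landau-type Tauberian argument or by Mellin inversion with a smooth weight, the partial-sum estimate $\sum_{n\le X}c_\pi(n)=cX+O(X^{1-\delta})$ for some $\delta>0$ depending on the zero-free region (and unconditionally $\ll X^{1+\varepsilon}$ is elementary once meromorphy at $s=1$ and moderate vertical growth are known). Since all that is needed here is the upper bound $X^{1+\varepsilon}$, the weakest form of the argument suffices and no zero-free region is required.

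The main technical point — really the only one — is establishing (or quoting) the analytic continuation, polar structure, and vertical growth of $L(s,\pi\times\tilde\pi)$ for a Hecke--Maass cusp form on $SL(d,\mathbb Z)$ with $d\ge 4$; this is precisely the content of the Jacquet--Piatetski-Shapiro--Shalika and Moeglin--Waldspurger theory of Rankin--Selberg integrals, and in the $GL(d)\times GL(d)$ setting it is entirely standard. Given that input, the passage to $\sum_{n\le X}|A_\pi(n)|^2\ll X^{1+\varepsilon}$ is routine, so I would present this lemma as a citation to \cite{Gold06b} with the one-paragraph contour sketch above.
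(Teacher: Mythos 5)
Your proposal is correct and is precisely the standard Rankin--Selberg argument that the paper itself invokes: the paper gives no proof, stating only that the bound ``follows from standard properties of the Rankin--Selberg $L$-function,'' and your sketch (nonnegative coefficients dominating $|A_\pi(1,\dots,1,n)|^2$, simple pole at $s=1$, Landau/Perron) is exactly that standard route.
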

The following bound is derived from the result of Friedlander and Iwaniec \cite[proposition 1.1]{FI05}. 
\begin{lemma}\label{FI sum bbdd}
We have 
$$
 \sum\limits_{n\le X} A_\pi(n)\ll_{\pi,\varepsilon} X^{\frac{d-1}{d+1}+\varepsilon},
$$
where the implied constant depends on $\pi$ and $\varepsilon$.
\end{lemma}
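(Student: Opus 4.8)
Lemma \ref{FI sum bbdd} is precisely the Friedlander--Iwaniec bound for partial sums of the coefficients of a degree-$d$ $L$-function, so the cleanest route is to verify that $L(s,\pi)$ meets the hypotheses of \cite[Proposition 1.1]{FI05} and invoke it. Indeed $L(s,\pi)=\sum_n A_\pi(n)n^{-s}$ is entire (since $\pi$ is cuspidal), is absolutely bounded in $\Re s\ge 1+\varepsilon$, has degree $d$ and satisfies the functional equation \eqref{functional equ} with the archimedean factor \eqref{DEF. of gamma,0,1}, and by Lemma \ref{lemma Rama Bound} its coefficients satisfy $\sum_{n\le X}|A_\pi(n)|^2\ll X^{1+\varepsilon}$; the dual form $\tilde\pi$ is again a Hecke--Maass cusp form for $SL(d,\mathbb Z)$, so $A_{\tilde\pi}$ obeys the same mean-square bound. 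These are exactly the inputs \cite{FI05} require, and the claimed estimate follows.

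For completeness I would recall the mechanism. First replace the sharp cutoff by a smooth one: choose $V\in C^\infty$ with $V\equiv1$ on $[0,1]$, $V\equiv0$ on $[1+\eta,\infty)$ and $V^{(j)}\ll\eta^{-j}$; then, by Cauchy--Schwarz and Lemma \ref{lemma Rama Bound},
\[
\Big|\sum_{n\le X}A_\pi(n)-\sum_n A_\pi(n)V(n/X)\Big|\le\sum_{X<n\le(1+\eta)X}|A_\pi(n)|\ll(\eta X)^{1/2}\Big(\sum_{n\le2X}|A_\pi(n)|^2\Big)^{1/2}\ll\eta^{1/2}X^{1+\varepsilon}.
\]
For the smoothed sum, write $\sum_n A_\pi(n)V(n/X)=\frac{1}{2\pi i}\int_{(c)}\widetilde V(s)X^sL(s,\pi)\,ds$ with $\widetilde V$ the Mellin transform of $V$, shift the contour to $\Re s=-c'$ (crossing only the simple pole of $\widetilde V$ at $s=0$, whose residue is the bounded constant $L(0,\pi)$ since $\pi$ is cuspidal), and on the new line substitute \eqref{functional equ} together with the Dirichlet series of $L(1-s,\tilde\pi)$. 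Evaluating the resulting $\Gamma$-factor integrals by Stirling and stationary phase yields a dual expression of the shape
\[
\sum_n A_\pi(n)V(n/X)=O(1)+\eta\,X^{\frac{d+1}{2d}}\sum_{m\,\ll\,X^{-1}\eta^{-d}}\frac{A_{\tilde\pi}(m)}{m^{\frac{d-1}{2d}}}\,e\!\big(c\,(mX)^{1/d}\big)\,w(m),
\]
with $c>0$ a constant and $w$ a smooth weight, the rapid decay of the $\Gamma$-factor transform truncating the dual variable at length $\asymp X^{-1}\eta^{-d}$. Bounding the dual sum trivially via Lemma \ref{lemma Rama Bound} gives $\sum_n A_\pi(n)V(n/X)\ll\eta^{-(d-1)/2}X^{\varepsilon}$, and optimizing $\eta^{-(d-1)/2}+\eta^{1/2}X$ at $\eta\asymp X^{-2/d}$ already produces $\sum_{n\le X}A_\pi(n)\ll X^{(d-1)/d+\varepsilon}$; to sharpen the exponent to $\tfrac{d-1}{d+1}$ one must extract genuine cancellation from the oscillatory dual sum and then re-optimize $\eta$.

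The step I expect to be the real obstacle is precisely this estimation of the oscillatory dual sum $\sum_{m\asymp M}A_{\tilde\pi}(m)\,e\!\big(c\,(mX)^{1/d}\big)$ for $M$ up to the dual length. For $d=2$ it is a classical van der Corput/Wilton-type bound, but for $d\ge3$ the GL$(d)$ Bessel kernel is a genuine Meijer $G$-function with several oscillatory branches, so one has to analyse the interaction of those branches with the smooth amplitude -- in effect a second application of the GL$(d)$ summation formula together with stationary phase, which localises the ``dual of the dual'' to $m\asymp X$, where Lemma \ref{lemma Rama Bound} applies once more. Carrying this through while keeping the smoothing width $\eta$ in the range where the dual length and the smoothing error stay in balance is the bookkeeping that yields exactly $X^{(d-1)/(d+1)+\varepsilon}$.
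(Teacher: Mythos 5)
Your proposal takes exactly the route the paper does: Lemma~\ref{FI sum bbdd} is stated in the paper with no proof beyond the attribution to \cite[Proposition~1.1]{FI05}, and your verification that $L(s,\pi)$ satisfies the required hypotheses (entirety, degree $d$, the functional equation \eqref{functional equ} with gamma factor \eqref{DEF. of gamma,0,1}, and the Rankin--Selberg mean-square bound of Lemma~\ref{lemma Rama Bound} for both $\pi$ and $\tilde\pi$) is precisely what is needed to invoke that result. Your additional sketch of the Friedlander--Iwaniec mechanism, including the honest identification of the oscillatory dual sum as the genuinely hard step, goes beyond what the paper records but is consistent with it.
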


\subsection{The stationary phase method}
In this subsection, we discuss the method of the stationary phase for evaluating oscillatory integrals of the form
$$
I:=\int_{a}^{b} w(x)e(f(x)) dx,
$$
where $f$ and $w$ are smooth real valued functions $[a,b]$. We begin with a lemma, for the case when stationary points do not exist, which we get by repeated integration by parts, showing that the oscillating integral is negligibly small.
\begin{lemma}\label{Stationary1}
Under the smoothness assumptions on $f$ and $w$, we obtain
\begin{equation*}
I\ll \frac{\mbox{Var}(w)}{\min |f^{(j)}(x)|^{1/j}},
\end{equation*}
where $\mbox{Var}$ is the total variance of $w$ on $[a,b]$. Furthermore, let $f'(x)\ge X$ and $f^{(j)}(x)\ll X^{1+\varepsilon}$ for $j\ge 2$ with $\mbox{Supp}(w)\subset (a,b) $ and $w^{(j)}(x)\ll_{a,b,j} 1$. Then we have 
$$
I\ll_{a,b,j,\varepsilon} X^{-j+\varepsilon}.
$$
\end{lemma}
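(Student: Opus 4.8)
The statement breaks into two essentially independent assertions, and I would handle them separately. The first inequality is a version of van der Corput's $j$-th derivative test carrying a general amplitude $w$, while the second is a clean non-stationary-phase (repeated integration by parts) estimate adapted to the situation where $f'$ is genuinely large and all higher derivatives are under control. Only the second part is actually used later in the paper, so I would state it with the explicit dependence on $j$ and otherwise keep the first part short.

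For the first bound I would first remove the amplitude by partial summation: writing $I=\int_a^b w(x)\,d\!\left(\int_a^x e(f(t))\,dt\right)$ and integrating by parts reduces matters to bounding $\sup_{[a,b]}\left|\int_a^x e(f(t))\,dt\right|$ at the cost of the factor $\mathrm{Var}(w)$ (plus a boundary contribution $|w(b)|$, which vanishes for the compactly supported $w$ relevant to the applications). It then remains to bound the pure exponential integral $\int_c^{c'} e(f(t))\,dt$ under the hypothesis $\min|f^{(j)}|=:\lambda>0$. For $j=1$, one integrates by parts once via $e(f)=(2\pi i f')^{-1}(e(f))'$ and uses monotonicity of $1/f'$ to control the total variation, giving $\ll\lambda^{-1}$. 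For $j\ge 2$ I would induct on $j$: since $|f^{(j)}|\ge\lambda$, the function $f^{(j-1)}$ is strictly monotone and hence vanishes at most once, say near $x_0$; on $|x-x_0|<\delta$ one uses the trivial bound $\ll\delta$, while on the complement $|f^{(j-1)}|\ge\lambda\delta$, so the induction hypothesis with $j-1$ in place of $j$ gives $\ll(\lambda\delta)^{-1/(j-1)}$, and choosing $\delta\asymp\lambda^{-1/j}$ balances the two terms and yields $\ll_j\lambda^{-1/j}$. Combining with the partial summation step gives the stated bound.

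For the second bound I would iterate integration by parts. Since $\frac{d}{dx}e(f(x))=2\pi i f'(x)\,e(f(x))$ and $f'\ge X>0$ never vanishes, set $(Lg)(x):=\frac{d}{dx}\!\left(\dfrac{g(x)}{2\pi i f'(x)}\right)$; then for every $k\ge 1$,
$$
I=\int_a^b w(x)\,e(f(x))\,dx=(-1)^k\int_a^b (L^k w)(x)\,e(f(x))\,dx,
$$
with no boundary terms, because $\mathrm{Supp}(w)\subset(a,b)$ forces $w$ and all its derivatives to vanish at $a$ and $b$, hence so do all the functions $L^m w$. Expanding $L^k w$ by the Leibniz and chain rules produces a bounded (in terms of $k$) number of terms, each a product of some $w^{(i)}$ (which is $\ll_{a,b,i}1$), some factors $\frac{d^m}{dx^m}\frac{1}{f'}$, and at least one bare factor $\frac{1}{f'}\ll X^{-1}$, with exactly $k$ occurrences of such "denominator" factors in total. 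The key auxiliary estimate is
$$
\frac{d^m}{dx^m}\,\frac{1}{f'(x)}\ \ll_{m,\varepsilon}\ X^{-1+\varepsilon},
$$
which follows by induction on $m$ from $f'\ge X$ and $f^{(i)}\ll X^{1+\varepsilon}$ for $i\ge 2$: each differentiation either increases the power of $f'$ in a denominator or introduces an extra $f^{(i)}\ll X^{1+\varepsilon}$, and in every resulting monomial the exponents balance to $X^{-1}$ up to finitely many factors of $X^{\varepsilon}$. Consequently each term of $L^k w$ is $\ll_{a,b,k,\varepsilon}X^{-k+\varepsilon}$, and bounding the integral trivially by $(b-a)\sup|L^k w|$ gives $I\ll_{a,b,k,\varepsilon}X^{-k+\varepsilon}$; taking $k=j$ finishes the proof.

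The routine but slightly delicate point is the combinatorial bookkeeping in the last paragraph: verifying that after $k$ applications of $L$ every surviving monomial carries exactly $k$ denominator factors, and hence the full saving $X^{-k}$ with only an $X^{\varepsilon}$ loss from the bounds $f^{(i)}\ll X^{1+\varepsilon}$. I would therefore organize this as a clean induction on $k$ rather than expanding by hand. (A minor subtlety in the first part is that the base case $j=1$ of the derivative test requires $f'$ to be monotone; in the intended applications $f'$ is monotone on the interval in question, or one splits $[a,b]$ into boundedly many pieces on which it is, so this causes no difficulty.)
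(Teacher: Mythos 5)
Your proposal is correct, and it follows exactly the route the paper indicates: the paper supplies no proof of this lemma at all, merely remarking that it is obtained ``by repeated integration by parts'' (it is the standard first/$j$-th derivative test with amplitude together with the non-stationary-phase bound, essentially Lemma~8.1 of Blomer--Khan--Young). Your partial-summation-plus-induction argument for the first bound and the iterated operator $L$ for the second are the standard proofs, and your parenthetical remark about the monotonicity needed in the $j=1$ case correctly flags a hypothesis the paper's (rather garbled) statement omits.
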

The next lemma provides an asymptotic estimate for the integral when a unique stationary point exists. It turns out that only a small neighbourhood around the stationary point contributes significantly to the value of the integral.
\begin{lemma} \label{Stationary2}
Let $0<\eta<1/10$, $X,Y, U_0,U_1 , R>0$, and $Z:=R+X+Y+U_1+1$ and assume that
\begin{equation}\label{lemma cond1}
    Y\ge Z^{3\eta}, \,\, U_1\ge U_0\ge \frac{RZ^{\eta/2}}{\sqrt{Y}}.
\end{equation}
Suppose that $w$ is a smooth function on $\mathbb{R}$ with support on an interval $J$ of length $U_1$, satisfying 
$$
w^{(j)}(x)\ll_j XU_0^{-j} \mbox{  for all } j=0,1,2,\dots.
$$
Further assume that $f$ is a smooth function on $J$ such that there exists a unique $x_0\in J$ such that $f'(x_0)=0$ and for all $x$
$$
f''(x)\gg YR^{-2}, \,\,\, f^{(j)}(x)\ll_j YR^{-j} \mbox{ for } j=0,1,2,\dots.
$$
Then  the integral defined by
$$
I:=\int\limits_{\mathbb{R}} w(x)e(f(x))dx
$$
has an asymptotic of the form
\begin{equation} \label{lemma asymp}
I=\frac{e(f(x_0))}{\sqrt{f''(x_0)}} \sum\limits_{n\le 3\eta^{-1} A} g_n(x_0)+O_{A,\eta}(Z^{-A}),
\end{equation}
where 
$$
g_n(x_0)=\frac{\sqrt{2\pi} e^{\pi i/4}}{n!}\Big(\frac{i}{2f''(x_0)}\Big)^n F^{(2n)}(x_0),
 \mbox{ and }
F(x)=w(x)e^{i(f(x)-f(x_0)-1/2f''(x_0)(x-x_0)^2)}
$$
Furthermore, every $g_n$ is a rational functions in $f'', f''', \dots $ satisfying the derivative bound 
\begin{equation}\label{lemma deri bdd}
\frac{d^j}{dt_0^j}g_n(x_0)\ll_{j,n} X(U_0^{-j}+ R^{-j})((U_0^2Y/R^2)^{-n}+Y^{-n/3}). 
\end{equation}
\end{lemma}
\begin{proof}
For the proof, we refer to \cite{BlKhYo2013}, Lemma 8.2.
\end{proof}
\begin{remark}
As observed in \cite[p.~2639]{BlKhYo2013}, the condition \eqref{lemma cond1} together with the derivative bound \eqref{lemma deri bdd} implies that, in the asymptotic expansion \eqref{lemma asymp}, each successive term is smaller than the preceding one. Hence, it suffices to consider only the leading term in the asymptotic expansion, provided that the condition \eqref{lemma cond1} is verified.
\end{remark}

\subsection{The delta method}
Let us briefly recall a version of the delta method due to Duke, Friedlander, and Iwaniec \cite{DFI93}. More specifically, we will use the expansion (20.157) given in Chapter 20 of \cite{IwaKowBook}. Let $\delta: \mathbb{Z} \longrightarrow \{0, 1\}$ be the Kronecker delta function defined by
$$
\delta(n,m)=
\begin{cases}
	1 &\text{if}\,\,n= m, \\
	0 &\text{otherwise.}
\end{cases}
$$
We seek a Fourier expansion which matches with $\delta$ in the range $[-2Q,2Q]$. To this end, let $Q \ge 2$ and take any $n,m \in \mathbb{Z} \cap [-2Q, 2Q]$. Then we have
\begin{equation*}
\delta(n,m)=\frac{1}{Q}\sum\limits_{1\le q\le Q}\frac{1}{q} \, \sideset{}{^\ast}\sum_{a\bmod q} e\Big(\frac{(n-m)a}{q}\Big) \int\limits_{\mathbb{R}}g(q,x) e\Big(\frac{(n-m)x}{qQ}\Big) dx,
\end{equation*}
where $g(q,x)$ satisfies the following properties
\begin{equation*}
\begin{split}
g(q,x)&=1+O\Big(\frac{Q}{q}\Big(\frac{q}{Q}+|x|\Big)^A\Big), \quad g(q,x)\ll |x|^{-A}, \\
&x^{j}\frac{\partial^j}{\partial{x}^j}g(q,x)\ll \min (Q/q,|x|^{-1}) \log Q
\end{split}
\end{equation*}
for any $A>1$, integer $j\ge 1$. Here, as later,  the $\ast$ attached to the sum symbol indicates that $a$ and $q$ are co-prime, and $e(x)=e^{2\pi i x}$. Moreover, the second property of $g(q,x)$ implies that the effective range of the integration over $x$ is $[-Q^\varepsilon, Q^\varepsilon].$ It follows that if $q\ll Q^{1-\varepsilon}$ and $x\ll Q^{-\varepsilon}$, then $g(q,x)$ can be replaced by $1$ at the cost of a negligible error term. In the complementary range, using the third property of $g(q,x)$, we have 
$$
x^{j}\frac{\partial^j}{\partial{x}^j}g(q,x)\ll Q^\varepsilon. 
$$
Finally in \cite{Munshi22}, by Parseval and Cauchy we get
$$
\int \Big(|g(q,x)|+|g(q,x)|^2 \Big) dx \ll Q^\varepsilon
$$
i.e., $g(q,x)$ has average size $1$ in the $L^1$ and $L^2$ sense. We summarize the above observations in the following lemma.
\begin{lemma} \label{lemma Delta}
Under the above notations, we have
$$
\delta(n,m)=\frac{1}{Q}\sum\limits_{1\le q\le Q}\frac{1}{q} \, \sideset{}{^\ast}\sum_{a\bmod q} e\Big(\frac{(n-m)a}{q}\Big) \int\limits_{\mathbb{R}} B_0(x)g(q,x) e\Big(\frac{(n-m)x}{qQ}\Big) dx +O(Q^{-2\cdot2025}),
$$
where $B_0$ is a smooth bump function supported in $[-2Q^{\varepsilon}, 2Q^{\varepsilon}]$ satisfying $B_0(x)=1$ for $x\in [-Q^{\varepsilon}, Q^{\varepsilon}]$ and $B_0^{(j)}\ll 1$.
\end{lemma}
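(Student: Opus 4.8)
\emph{Proof sketch.} The plan is to obtain the stated lemma from the exact Duke--Friedlander--Iwaniec expansion displayed immediately above it, by inserting the bump function $B_0$ and discarding a negligible tail. Concretely, starting from
\[
\delta(n,m)=\frac{1}{Q}\sum_{1\le q\le Q}\frac{1}{q}\,\sideset{}{^\ast}\sum_{a\bmod q} e\!\Big(\frac{(n-m)a}{q}\Big)\int_{\mathbb{R}} g(q,x)\, e\!\Big(\frac{(n-m)x}{qQ}\Big)\,dx,
\]
which is valid for all $n,m\in\mathbb{Z}\cap[-2Q,2Q]$, I would split $g(q,x)=B_0(x)g(q,x)+\big(1-B_0(x)\big)g(q,x)$. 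Substituting the first summand reproduces verbatim the main term claimed in the lemma, so the whole content of the statement is the assertion that the contribution of $\big(1-B_0(x)\big)g(q,x)$ is $O(Q^{-A})$ for every $A>0$; this simply makes rigorous the informal ``effective range $[-Q^\varepsilon,Q^\varepsilon]$'' remark preceding the lemma.

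For the tail, the key observation is that $1-B_0$ is supported on $\{|x|>Q^\varepsilon\}$ since $B_0\equiv 1$ on $[-Q^\varepsilon,Q^\varepsilon]$. On that set I would invoke the decay bound $g(q,x)\ll_{B}|x|^{-B}$, which holds for any $B>1$ by the second listed property of $g$, and estimate the inner integral crudely by
\[
\int_{|x|>Q^\varepsilon}\big|g(q,x)\big|\,dx\;\ll_B\;\int_{Q^\varepsilon}^{\infty}x^{-B}\,dx\;\ll_B\;Q^{-\varepsilon(B-1)}.
\]
Bounding the complete exponential sum trivially by $\bigl|\sideset{}{^\ast}\sum_{a\bmod q}e((n-m)a/q)\bigr|\le\varphi(q)\le q$, the entire triple sum-integral coming from $1-B_0$ is
\[
\ll_B\;\frac{1}{Q}\sum_{1\le q\le Q}\frac{1}{q}\cdot q\cdot Q^{-\varepsilon(B-1)}\;\ll_B\;Q^{-\varepsilon(B-1)}.
\]
Choosing $B$ large enough that $\varepsilon(B-1)\ge A$ turns this into $O(Q^{-A})$, and combining with the main term gives the lemma; the resulting estimate is uniform in $n,m$, including the diagonal case $n=m$.

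Since the decay exponent $B$ of $g(q,x)$ may be taken arbitrarily large, there is essentially no obstacle here: the only mild point is that summing over the $q\le Q$ moduli costs a factor $Q$, precisely cancelling the $1/Q$ prefactor once the Ramanujan sum is bounded by $\varphi(q)\le q$, so one genuinely needs the super-polynomial decay of $g$ away from the origin (rather than any cancellation in the $a$- or $q$-sums) to absorb this loss. The remaining properties of $B_0$ --- smoothness and $B_0^{(j)}\ll 1$ --- play no role in the proof of the lemma itself; they are recorded because they are needed later, when this expansion is fed into the oscillatory-integral and stationary-phase estimates of the subsequent sections.
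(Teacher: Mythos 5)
Your argument is correct and is exactly the standard truncation argument: the paper itself offers no proof beyond citing Chapter~20 of Iwaniec--Kowalski and Lemma~15 of Huang, and those sources establish the statement precisely as you do, by splitting off $(1-B_0)g(q,x)$ and absorbing its contribution via the arbitrary polynomial decay $g(q,x)\ll_B |x|^{-B}$, with the trivial bound $\varphi(q)\le q$ on the Ramanujan sum and the $1/Q$ prefactor cancelling the sum over $q$. Your closing observations --- that the smoothness of $B_0$ is only needed downstream, and that no cancellation in the $a$- or $q$-sums is used --- are accurate.
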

\begin{proof}
See Chapter~20 of \cite{IwaKowBook} and Lemma~15 of \cite{Huang2021}.
\end{proof}

\section{Treatment of $B(d_1,d_2;H,N)$}
We rewrite the sum $B(d_1,d_2;H,N)$ appearing in \eqref{def of B(d_1,d_2;H,N)} in the smoothed form
\begin{equation*}
B(d_1,d_2;H,N)=\frac{1}{H}\sum\limits_{h=1}^{\infty}W\Big(\frac{h}{H}\Big)\sum_{n=1}^{\infty}A_{\pi_1}(n) A_{\pi_2}(n+h)V\Big(\frac{n}{N}\Big),
\end{equation*}
where $W$ and $V$ are compactly supported smooth functions on the interval $[1,2]$ satisfying the condition $W^{(j)}\ll_j 1$ and $V^{(j)}\ll_j 1$. Now we detect the equation $n+h=m$ by the delta symbol and rewrite our main sum as
\begin{equation} \label{B(d_1,d_2;H,N) sum before delta}
B(d_1,d_2;H,N)=\frac{1}{H}\sum\limits_{h=1}^{\infty}W\Big(\frac{h}{H}\Big)\sum_{n=1}^{\infty}A_{\pi_1}(n) V\Big(\frac{n}{N}\Big) \sum_{m=1}^{\infty} A_{\pi_2}(m) U\Big(\frac{m}{M}\Big) \delta(m,n+h),
\end{equation}
where $M=N+h\asymp N$, and $U$ is a compactly supported smooth function in the interval $[1/2,5/2]$ satisfying $U(x)=1$ for $[1,2]$ and $U^{(j)}\ll_j 1$. To separate the oscillations involved in the sum $B(d_1,d_2;H,N)$, we apply the delta method expansion (Lemma~\ref{lemma Delta}) with the choice $Q = N^{1/2}$, we obtain
\subsection{Applying the delta method}
Applying the delta method, we obtain 
\begin{equation} \label{After Application of delta}
\begin{split}
B(d_1,d_2;H,N)= &\frac{1}{HQ} \sum\limits_{1\le q\le Q}\frac{1}{q} \,\,\sideset{}{^\ast}\sum_{a\bmod q}\,\int\limits_{\mathbb{R}} B_0(x)g(q,x) \sum\limits_{h\in \mathbb{Z}}W\left(\frac{h}{H}\right) e\left(-\frac{ah}{q}\right) e\left(-\frac{xh}{qQ}\right)\\
&\times \sum\limits_{n=1}^{\infty}A_{\pi_1}(n) V\left(\frac{n}{N}\right)e\left(-\frac{an}{q}\right) e\left(-\frac{nx}{qQ}\right) \\
&\times \sum\limits_{m=1}^{\infty}A_{\pi_2}(m) U\left(\frac{m}{M}\right)e\left(\frac{am}{q}\right) e\left(\frac{mx}{qQ}\right) dx +O(N^{-2025}).
\end{split}
\end{equation}

\subsection{Applying the Poisson summation formula}
Changing the variable $h=\beta +hq$ in the $h$-sum of the above sum, we obtain
\begin{equation*}
\sum\limits_{h \in \mathbb{Z}} W\left(\frac{h}{H}\right) e\left(-\frac{ah}{q}\right) e\left(-\frac{xh}{qQ}\right)= \sum\limits_{h \in \mathbb{Z}}\sum\limits_{\beta \bmod q} W\left(\frac{\beta+ qh}{H}\right) e\left(-\frac{a\beta}{q}\right) e\left(-\frac{x(\beta+qh)}{qQ}\right).
\end{equation*}
Applying the Poisson summation formula on $h$-sum, we have  
$$ \sum\limits_{\beta \bmod q}e\left(-\frac{a\beta}{q}\right) \sum\limits_{h \in \mathbb{Z}} \int\limits_{\mathbb{R}} W\left(\frac{\beta+ qy}{H}\right) e\left(-\frac{x(\beta+qy)}{qQ}\right) e(-yh)dy.
$$
Putting $\frac{\beta+qy}{H}=z$, the above sum becomes
\begin{equation} \label{Gen: After Poissonon on h}
\begin{split}
&\frac{H}{q}\sum\limits_{\beta \bmod q}e\left(-\frac{a\beta}{q}\right) \sum\limits_{h \in \mathbb{Z}} \int\limits_{\mathbb{R}} W(z) e\left(-\frac{xzH}{qQ}\right) e\left(\frac{(\beta-Hz)h}{q}\right)dz\\
&=\frac{H}{q} \sum\limits_{\beta \bmod q}e\left(-\frac{a\beta}{q}\right) \sum\limits_{h \in \mathbb{Z}} \Hat{W}\left(\frac{xH}{qQ}+\frac{Hh}{q}\right) e\left(\frac{\beta h}{q}\right)\\
&=\frac{H}{q} \sum\limits_{\beta \bmod q} \sum\limits_{h \in \mathbb{Z}} \Hat{W}\left(\frac{xH}{qQ}+\frac{Hh}{q}\right) e\left(\frac{\beta (h-a)}{q}\right).
\end{split}
\end{equation}
Combining \eqref{After Application of delta} and \eqref{Gen: After Poissonon on h}, we have 
\begin{equation}\label{B sum dual}
\begin{split}
B(d_1,d_2;H,N)= &\frac{1}{Q} \sum\limits_{1\le q\le Q} \frac{1}{q^2}\,\,\sideset{}{^\ast}\sum_{a\bmod q}\,\int\limits_{\mathbb{R}} B_0(x)g(q,x) \sum\limits_{\beta \bmod q} \sum\limits_{h \in \mathbb{Z}} \Hat{W}\left(\frac{xH}{qQ}+\frac{Hh}{q}\right) e\left(\frac{\beta (h-a)}{q}\right)\\
&\times \sum\limits_{n=1}^{\infty}A_{\pi_1}(n) V\left(\frac{n}{N}\right)e\left(-\frac{an}{q}\right) e\left(-\frac{nx}{qQ}\right) \\
&\times \sum\limits_{m=1}^{\infty}A_{\pi_2}(m) U\left(\frac{m}{M}\right)e\left(\frac{am}{q}\right) e\left(\frac{mx}{qQ}\right) dx +O(N^{-2025}).
\end{split}
\end{equation}

\subsection{The zero frequency} 
Suppose we take the length of the shift to satisfy $H \ge N^{1/2+\varepsilon}$ for some $\varepsilon>0$ (see Remark~\ref{condition on H}). Since $W$ is a compactly supported smooth function on $[1,2]$ with $W^{(j)} \ll_j 1$, its Fourier transform $\hat{W}(\xi)$ is rapidly decaying for $|\xi|\gg 1$ and is negligible unless $|\xi|\ll 1$. Hence the expression $\frac{xH}{qQ}+\frac{Hh}{q}$, appearing inside $\hat{W}$, must be very small for the corresponding term to contribute significantly to the above equation \eqref{B sum dual}. Since $H \ge N^{1/2+\varepsilon}$ and $q \asymp N^{1/2}$, we have $\frac{Hh}{q} \gg N^{\varepsilon}$ for every $h \ne 0$, so all nonzero $h$ lie outside the effective support of $\hat{W}$. Therefore only the zero frequency $h=0$ contributes significantly. Substituting $h=0$ into the sum \eqref{B sum dual}, we get
\begin{equation*}\label{B sum zero} 
\begin{split}
B(d_1,d_2;H,N)= &\frac{1}{Q} \sum\limits_{1\le q\le Q} \frac{1}{q^2}\,\,\sideset{}{^\ast}\sum_{a\bmod q}\,\int\limits_{\mathbb{R}} B_0(x)g(q,x) \Hat{W}\left(\frac{xH}{qQ}\right) \sum\limits_{\beta \bmod q}  e\left(\frac{-\beta a}{q}\right)\\
&\times \sum\limits_{n=1}^{\infty}A_{\pi_1}(n) V\left(\frac{n}{N}\right)e\left(-\frac{an}{q}\right) e\left(-\frac{nx}{qQ}\right) \\
&\times \sum\limits_{m=1}^{\infty}A_{\pi_2}(m) U\left(\frac{m}{M}\right)e\left(\frac{am}{q}\right) e\left(\frac{mx}{qQ}\right) dx +O(N^{-2025}).
\end{split}
\end{equation*}
Since $(a,q)=1$, then 
$$
\sum\limits_{\beta \bmod q}  e\left(\frac{-\beta a}{q}\right)= 0
$$ unless $q=1$. Hence 
\begin{equation}\label{B sum q=1}
\begin{split}
B(d_1,d_2;H,N)= &\frac{1}{Q} \int\limits_{\mathbb{R}} B_0(x)g(1,x) \Hat{W}\left(\frac{xH}{Q}\right) 
\sum\limits_{n=1}^{\infty}A_{\pi_1}(n) V\left(\frac{n}{N}\right)e\left(-\frac{nx}{Q}\right) \\
&\times \sum\limits_{m=1}^{\infty}A_{\pi_2}(m) U\left(\frac{m}{M}\right) e\left(\frac{mx}{Q}\right) dx +O(N^{-2025}).
\end{split}
\end{equation} 

\begin{remark}\label{condition on H}
The assumption $H \ge N^{1/2+\varepsilon}$ can be imposed without loss of generality. 
Indeed, our main theorems provide lower bounds for the shift parameter $H$ of the form 
$H \ge N^{1 - \frac{2}{d}+\varepsilon}$ in Theorem~\ref{Main Th 1} and 
$H \ge N^{1 - \frac{4}{d_1 + d_2}+\varepsilon}$ in Theorem~\ref{Main Th 2}. 
These inequalities imply $H \ge N^{1/2+\varepsilon}$ whenever $d \ge 4$ and 
$d_1 + d_2 \ge 8$, respectively, which correspond to the ranges relevant to our analysis. Hence, the condition $H\ge N^{1/2+\varepsilon}$ imposes no additional restriction and serves only to simplify the subsequent analysis.
\end{remark}

\section{Application of the functional equation}

We observe that the inner sums appearing in \eqref{B sum q=1} are of a similar form. In this section, we therefore focus on estimating
\begin{equation} \label{Def Xi}
\Xi(x):=\sum\limits_{n=1}^{\infty}A_\pi(n) V\left(\frac{n}{N}\right)e\left(-\frac{nx}{Q}\right).    
\end{equation}
For convenience, set
\begin{equation*} \label{Def w_x}
\omega_x(z):=V(z)e\left(-\frac{Nxz}{Q}\right).  
\end{equation*}
By applying the inverse Mellin transform, we obtain
\begin{equation} \label{after mellin Xi}
\begin{split}
\Xi(x)=\sum\limits_{n=1}^{\infty}A_\pi(n) \omega_x(n/N) =& \frac{1}{2\pi i} \int\limits_{\Re (s)=\sigma} N^s\, \Tilde{\omega}_x(s) \sum\limits_{n=1}^{\infty} \frac{A_{\pi}(n)}{n^s} ds\\
=& \frac{1}{2\pi i} \int\limits_{\Re (s)=\sigma} N^s\, \Tilde{\omega}_x(s) L(s,\pi)ds
\end{split} 
\end{equation}  
for any $\sigma$. Applying the functional equation to \eqref{after mellin Xi} for $L(s,\pi)$ (see \eqref{functional equ}), we have 
\begin{equation*}
\Xi(x)= \frac{1}{2 \pi i} \int\limits_{\Re (s)=\sigma} N^s \Tilde{\omega}_x(s) \gamma_{\delta_0}(s)  L(1-s,\tilde{\pi}) ds.
\end{equation*}

Let $\mathcal{V} = \{(V_0, \Tilde{N})\}$ be a smooth dyadic partition of unity, consisting of pairs $(V_0, \Tilde{N})$, 
where $V_0 : [1,2] \to \mathbb{R}_{\ge 0}$ is a smooth function satisfying 
$$
\sum_{(V_0,\, \tilde{N})} V_0\!\left(\frac{n}{\tilde{N}}\right) = 1, 
\quad \text{for all } n \in (0, \infty).
$$
Moreover, the collection is locally finite in the sense that for any given $\ell \in \mathbb{Z}$, 
there are only finitely many pairs with 
$
\Tilde{N} \in [2^{\ell}, 2^{\ell+1}].$ Moving the line of inetegration to $\sigma=-\epsilon$ and expanding the $L$-function into its automorphic $L$-series, and using a smooth dyadic partition of unity $\mathcal{V} $, we transform the sum $\Xi(x)$ into
\begin{equation} \label{without cut off n}
\Xi(x)=\sum\limits_{\mathcal{V}}\sum\limits_{n=1}^{\infty} \frac{A_{\tilde{\pi}}(n)}{n} V_0\Big(\frac{n}{\Tilde{N}}\Big) \Omega_x(nN), 
\end{equation}
where 
\begin{equation}\label{Omega_x nN}
\Omega_x(nN)=\frac{1}{2\pi i}\int\limits_{\Re (s)=-\epsilon} (Nn)^s \Tilde{\omega}_x(s) \gamma_{\delta_0}(s) \, ds.  
\end{equation}

Recalling $\omega_x(s)$, and putting $s=\sigma+i\tau$, we get
\begin{equation} \label{omega_x2}
\begin{split}
\Tilde{\omega}_x(\sigma+i\tau)&=\int\limits_{0}^{\infty}V(z)e\Big(-\frac{xzN}{Q} + \frac{\tau}{2\pi}\log z\Big)z^{\sigma-1}dz. 
\end{split}
\end{equation}
We define the phase function
$$
g(z) := -\frac{N x z}{Q} + \frac{\tau}{2\pi} \log z.
$$
Then
$$
g'(z)=-\frac{Nx}{Q}+\frac{\tau}{2\pi z} \mbox{ and } g''(z)=-\frac{\tau}{2\pi z^2}.
$$
The stationary point is 
\begin{equation*}
 z_0 = \frac{\tau Q}{2\pi N x}.   
\end{equation*}
Applying repeated integration by parts, the integral $\Tilde{\omega}_x(\sigma+i\tau)$
is negligible unless
\begin{equation} \label{st point}
  |\tau| \asymp \frac{N |x|}{Q} \,\, \mbox{ and } \,\, \text{sgn}(\tau)=\text{sgn}(x).  
\end{equation}
Using the Stirling formula, for fixed $\sigma$ we have
$$
\gamma_{\delta_0}(\sigma+i\tau)\ll_{\sigma,\pi}(1+|\tau|)^{d(-\sigma+1/2)}.
$$
Plugging the above bound into the equation \eqref{Omega_x nN} and using \eqref{st point}, we get
\begin{equation*}
\begin{split}
\Omega_x(Nn) &=\frac{1}{2\pi}\int\limits_{-\infty}^{\infty} (Nn)^{\sigma+i\tau} \Tilde{\omega}_x(\sigma+i\tau) \gamma_{\delta_0}(\sigma+i\tau) \, d\tau  \\
&\ll (Nn)^\sigma \Big(\frac{N|x|}{Q}\Big)^{-d\sigma+ d/2+1}.
\end{split}
\end{equation*}
Using $x\asymp Q/H$, we get
\begin{equation*}
\Omega_x(Nn)\ll (Nn)^\sigma \Big(\frac{N}{H}\Big)^{-d\sigma+ d/2+1} = (N/H)^{d/2+1} \Bigg(\frac{n}{\frac{(N/H)^d}{N}}\Bigg)^{\sigma}.
\end{equation*}
If $\tilde{N} \gg \frac{(N/H)^d N^{\varepsilon}}{N}$, we shift the contour sufficiently far to the left. On the other hand, if $\frac{(N/H)^d N^{-\varepsilon}}{N} \ll \tilde{N}$, we shift the contour to the right, keeping away from the poles of the gamma factors. This is possible since the contour satisfies $\tau \asymp N/H \gg 1$, while the poles lie on the positive real axis. Hence, we observe that the contribution $\Omega_x(Nn)$ from the above mentioned ranges is negligibly small. 

Let $\mathcal{V}^\ast$ be the subset of $\mathcal{V}$ consisting of those pairs $(V_0, \tilde{N})$ with 
$\tilde{N}$ restricted to the range 
\begin{equation} \label{tilde range N}
\frac{(N/H)^d N^{-\varepsilon}}{N} \ll \tilde{N} \ll \frac{(N/H)^d N^{\varepsilon}}{N}.
\end{equation}
Then, from \eqref{without cut off n}, we get
\begin{equation}\label{cut of n}
\Xi(x)=\sum\limits_{\mathcal{V}^\ast}\sum\limits_{n=1}^{\infty} \frac{A_{\tilde{\pi}}(n)}{n} V_0\Big(\frac{n}{\Tilde{N}}\Big) \Omega_x(nN) +O(N^{-2025}).
\end{equation}

\subsection{Simplification of the integrals}
We first simplify the integral $\omega_x(s)$. 
The condition \eqref{st point} ensures the existence of a stationary phase point within the support of the smooth function $V(z)$. Hence by Lemma~\ref{Stationary2} with $X\asymp U_0\asymp U_1\asymp R\asymp1,$ and $Y\asymp|\tau|$, the expression in \eqref{omega_x2} becomes
\begin{equation*} 
\begin{split}
\Tilde{\omega}_x(\sigma+i\tau) \asymp \frac{e\Big(-\frac{\tau}{2\pi}+ \frac{\tau}{2\pi}\log(\frac{\tau Q}{2\pi Nx})\Big) V\big(\frac{\tau Q}{2\pi N x}\big)}{\sqrt{\tau}} +O(N^{-2025}),
\end{split}
\end{equation*} 
where $V$ is a new compactly supported smooth function with bounded derivatives.
Plugging it into the \eqref{Omega_x nN}, we get 
\begin{equation*} 
\begin{split}
\Omega_x(nN)&\asymp\int\limits_{\Re (s)=\sigma} (Nn)^{\sigma+i\tau}\cdot \frac{e\Big(-\frac{\tau}{2\pi}+ \frac{\tau}{2\pi}\log(\frac{ \tau Q}{2\pi Nx})\Big)}{\sqrt{\tau}} \, \gamma_{\delta_0}(s) \, V\big(\frac{\tau Q}{2\pi N x}\big) ds +O(N^{-2025}),
\end{split}
\end{equation*}
where $V$ is a new compactly supported smooth function with bounded derivatives.

Now, we take $\delta_{0}=0$, corresponding to the case of even Hecke--Maass cusp forms. For odd Hecke--Maass cusp forms, i.e.~$\delta_{0}=1$, the arguments given below remain valid. Putting $\delta_{0}=0$, we get
\begin{equation*}
\Omega_x(nN)\asymp\int\limits_{\Re (s)=\sigma} (Nn)^{\sigma+i\tau}\cdot \frac{e\Big(-\frac{\tau}{2\pi}+ \frac{\tau}{2\pi}\log(\frac{ \tau Q}{2\pi Nx})\Big) V\big(\frac{\tau Q}{2\pi N x}\big) }{\sqrt{\tau}} \, \gamma_{0}(s) \, ds +O(N^{-2025}).
\end{equation*}
To cancel out the oscillation of gamma factors, we move the contour to $\sigma =1/2$. Then we have
\begin{equation} \label{Omega half}
\Omega_x(nN)\asymp\int\limits_{\mathbb{R}} (Nn)^{1/2+i\tau} \cdot \frac{e\Big(-\frac{\tau}{2\pi}+ \frac{\tau}{2\pi}\log(\frac{\tau Q}{2\pi Nx})\Big) V\big(\frac{\tau Q}{2\pi N x}\big)}{\sqrt{\tau}} \, \gamma_{0}(1/2+i\tau) \, d\tau  +O(N^{-2025}).
\end{equation}
The Stirling formula gives us 
\begin{equation*}
\Gamma(\sigma+i\tau)=\sqrt{2\pi} |\tau|^{\sigma-1/2+i\tau} \exp\big(-|\tau|\frac{\pi}{2} -i\tau+ i\frac{\pi}{2} (\sigma-1/2) \mbox{sgn}(\tau)\big) \big(1+O(|\tau|^{-1})\big)    
\end{equation*}
as $|\tau| \to \infty$. Hence for  $|\tau|\in [T,2T]$ with large $T$, and $\alpha_j\ll 1$, we have
\begin{equation*} 
\frac{\Gamma(\frac{1/2-i\tau -\overline{\alpha_j}}{2})}{\Gamma(\frac{1/2+i\tau-\alpha_j}{2})}=\Big(\frac{|\tau|}{2e}\Big)^{-i\tau}(1+ O(T^{-1})).
\end{equation*} 
Recalling the definition of \eqref{DEF. of gamma,0,1}, we get
\begin{equation*}
\gamma_{0}(1/2+i\tau)=\pi^{-d(1/2-s)}\Big(\frac{|\tau|}{2e}\Big)^{-id\tau }(1+ O(T^{-1})).
\end{equation*} 
Pugging it into the \eqref{Omega half}, we get
\begin{equation*} 
\Omega_x(nN) \asymp (Nn)^{1/2}\int\limits_{\mathbb{R}} (Nn)^{i \tau} e\Big(-\frac{\tau}{2\pi} +\frac{\tau}{2\pi} \log\big(\frac{Q\tau}{2\pi Nx} \big)\Big)\pi^{id\tau} \Big(\frac{\tau}{2e}\Big)^{-id\tau} \tau^{-1/2} V\Big(\frac{\tau Q}{2\pi N x}\Big) d\tau  +O(N^{-2025}).
\end{equation*}
Expressing the oscillatory factors in exponential form, the resulting expression becomes
\begin{equation} \label{Omega with V_1}
\Omega_x(nN) \asymp (Nn)^{1/2}\int\limits_{\mathbb{R}} e\big(g_1(\tau)\big) V_1(\tau)d\tau+O(N^{-2025}),
\end{equation} 
where the weight function 
\begin{equation*}
V_1(\tau):=\tau^{-1/2} V\Big(\frac{\tau Q}{2\pi N x}\Big)
\end{equation*}
is compactly supported smooth functions on the interval  $[2\pi Nx/Q,\, 4\pi Nx/Q]$, and the phase function is given by
\begin{equation*}
g_1(\tau):=\frac{\tau}{2\pi} \log\Big(\frac{(2\pi)^{(d-1)} n Q}{\tau^{(d-1)} x}\Big)+\frac{(d-1)\tau}{2\pi}. 
\end{equation*}
The first and second derivatives of $g_1$ are
$$
g_1'(\tau)=\frac{1}{2\pi}\log\Big(\frac{(2\pi)^{(d-1)}Qn}{x}\Big)-\frac{d-1}{2\pi}\log \tau
$$
and  
$$
g_1''(\tau)=-\frac{d-1}{2\pi \tau}.
$$
The stationary point $\tau_0$ is determined by $g_1'(\tau_0)=0$. This gives $\tau_0^{(d-1)}=(2\pi)^{(d-1)}\frac{Qn}{x}$, which implies $\tau_0=2\pi \big( \frac{Qn}{x}\big)^{\frac{1}{d-1}}$. By repeated integration by parts, the integral $\Omega(nN)$ is negligibly small unless $\tau_0 \asymp Nx/Q$. Note that the stationary point satisfies $\tau_0 \asymp N/H \asymp Nx/Q$, which in particular lies in the support of $V_1$. Hence, by Lemma \ref{Stationary2} with $U_0\asymp U_1\asymp R\asymp Nx/Q$, $X\asymp (Nx/Q)^{-1/2}$ and $Y\asymp Nx/Q$, the expression in \eqref{Omega with V_1} becomes 
\begin{equation} \label{Omega with V_1 LAST} 
\begin{split}
\Omega_x(nN) &\asymp (Nn)^{1/2} \frac{1}{\sqrt{1/\tau_0}} e\Big( \frac{\tau_0}{2\pi} \log\Big(\frac{(2\pi)^{(d-1)} n Q}{\tau_0^{(d-1)} x}\Big)+\frac{(d-1)\tau_0}{2\pi} \Big) V_1(\tau_0) +O(N^{-2025})\\
&\asymp (Nn)^{1/2} \, e\Big((d-1) \Big(\frac{Qn}{x} \Big)^{\frac{1}{d-1}}\Big)\, V\Big(\big(Q/x\big)^{\frac{d}{d-1}} n^{\frac{1}{d-1}} N^{-1}\Big) +O(N^{-2025}).
\end{split}
\end{equation}
Combining \eqref{Def Xi}, \eqref{cut of n}, and  \eqref{Omega with V_1 LAST}, we arrive at the following lemma.

\begin{lemma}\label{Lemma dual sums}
Under the above notations, we have 
\begin{equation*}
\begin{split}
\sum\limits_{n=1}^{\infty}A_\pi(n) V\left(\frac{n}{N}\right)e\left(\frac{nx}{Q}\right) & \asymp N^{1/2}\sum\limits_{\mathcal{V}^\ast}\sum\limits_{n=1}^{\infty} \frac{A_{\tilde{\pi}}(n)}{n^{1/2}} V_0\Big(\frac{n}{\Tilde{N}}\Big)   \\
& \quad \times e\Big( (d-1) \Big(\frac{Qn}{x} \Big)^{\frac{1}{d-1}}\Big) \,V\Big(\big(Q/x\big)^{\frac{d}{d-1}} n^{\frac{1}{d-1}} N^{-1}\Big) +O(N^{-2025}).
\end{split}
\end{equation*}
\end{lemma}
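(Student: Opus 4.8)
The plan is to make rigorous the chain of transformations sketched in the lines preceding the lemma. First I would fix a generic Hecke--Maass cusp form $\pi$ for $SL(d,\mathbb{Z})$, set $\omega(z)=V(z)e(Nxz/Q)$ and write $\Xi$ via the inverse Mellin transform as in \eqref{after mellin Xi}, namely $\Xi=\frac{1}{2\pi i}\int_{(\sigma)}N^{s}\tilde{\omega}(s)L(s,\pi)\,ds$. Substituting the functional equation \eqref{functional equ} replaces $L(s,\pi)$ by $\gamma_{\delta_0}(s)L(1-s,\tilde\pi)$. Shifting the contour to $\Re(s)=-\epsilon$, expanding $L(1-s,\tilde\pi)=\sum_{n}A_{\tilde\pi}(n)n^{s-1}$ into its Dirichlet series and inserting the dyadic partition of unity $\mathcal V$, I identify the effective length of the dual variable as $\tilde N\asymp (N/H)^{d}/N$; for larger $\tilde N$ a further shift of the contour to the left shows that the contribution is $O(N^{-A})$, so only the pairs in $\mathcal V^{\ast}$ survive and we arrive at \eqref{Simga sum 2} with $\Omega(n)$ given by \eqref{Omega0}.

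Next I would evaluate $\Omega(n)$ by a two-fold stationary phase argument. Writing $s=\sigma+i\tau$, the Mellin transform $\tilde\omega(\sigma+i\tau)$ is an oscillatory integral in $z$ with phase $g(z)=Nxz/Q+(\tau/2\pi)\log z$; repeated integration by parts (Lemma~\ref{Stationary1}) confines $|\tau|$ to the range $|\tau|\asymp N|x|/Q$ and forces $x<0$, so that the stationary point $z_{0}=-\tau Q/(2\pi Nx)$ of \eqref{stationary point z_0} lies in the support of $V$. Applying Lemma~\ref{Stationary2} then gives the closed form \eqref{omega2} for $\tilde\omega(\sigma+i\tau)$. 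Substituting this into \eqref{Omega0}, taking $\delta_{0}=0$ (the odd case being verbatim identical), shifting the contour to $\Re(s)=1/2$ to neutralise the oscillation of the gamma factors, and applying Stirling's formula to $\gamma_{0}(1/2+i\tau)$, I reduce $\Omega(n)$ to the $\tau$-integral \eqref{Omega with V_1} with phase $g_{1}(\tau)=(\tau/2\pi)\log\!\big(nQ\tau^{1-d}/((2\pi)^{1-d}x)\big)-(1-d)\tau/(2\pi)$. A last application of Lemma~\ref{Stationary2}, at the stationary point $\tau_{0}=2\pi(Qn/x)^{1/(d-1)}$ with $g_{1}''(\tau_{0})=(1-d)/(2\pi\tau_{0})$, yields \eqref{Omega with V_1 LAST}, and combining this with \eqref{Def Xi}, \eqref{after mellin Xi} and \eqref{Simga sum 2} gives the stated identity.

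The main obstacle is the uniform bookkeeping across the two applications of Lemma~\ref{Stationary2}: one must check that the parameters $X,Y,U,U_{1},R$ entering its hypotheses (in particular $Y\gg Z^{3\eta}$ and $U\gg RZ^{\eta/2}/\sqrt{Y}$) hold throughout the relevant ranges of $q$, $x$, $\tau$, $n$ and $\tilde N$, and that the lower-order terms $g_{n}(x_{0})$ in the asymptotic expansion, together with the $O(N^{-A})$ tails produced by the contour shifts, remain negligible after being summed against $A_{\tilde\pi}(n)/n$ and integrated in $x$. Since the final statement records only the leading term, I would absorb the full asymptotic expansion supplied by Lemma~\ref{Stationary2} into the notation $V_{1}$ for a generic smooth compactly supported weight with bounded derivatives; this is legitimate because each successive $g_{n}$ carries an extra negative power of $Z$, so the omitted terms do not affect the power-saving bounds eventually claimed in Theorems~\ref{Main Th 1} and~\ref{Main Th 2}.
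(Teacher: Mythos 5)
Your proposal follows essentially the same route as the paper: inverse Mellin transform, functional equation, contour shift to $\Re(s)=-\epsilon$ with dyadic truncation of the dual sum at $\tilde N\asymp (N/H)^d/N$, then the two successive stationary phase evaluations (first in $z$ for $\tilde\omega(\sigma+i\tau)$, then in $\tau$ after Stirling) leading to \eqref{Omega with V_1 LAST}. Your closing remarks about verifying the hypotheses of Lemma~\ref{Stationary2} uniformly and absorbing the lower-order terms of the expansion into the weight $V_1$ address precisely the points the paper itself leaves implicit, so the proposal is correct and matches the paper's proof.
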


Applying the above Lemma~\ref{Lemma dual sums} to the two inner sums appearing in $B(d_1,d_2;H,N)$ in equation~\eqref{B sum q=1}, and absorbing the sum over $\mathcal V^\ast$ into the factor $N^\varepsilon$, we obtain 
\begin{equation*}
\begin{split}
B(d_1,d_2;H,N) &\ll \frac{(NM)^{1/2}(NM)^\varepsilon}{Q}\int\limits_{\mathbb{R}} B_0(x)g(1,x)\hat{W}\Big(\frac{xH}{Q}\Big) \\
& \times \sum\limits_{n\sim \Tilde{N}}\frac{A_{\tilde{\pi}_1}(n)}{n^{1/2}} V\Big(\big(Q/x\big)^{\frac{d_1}{d_1-1}} n^{\frac{1}{d_1-1}} N^{-1}\Big) e\Big((d_1-1) \left(\frac{Qn}{x}\right)^{\frac{1}{d_1-1}}\Big) \\
&\times \sum\limits_{m\sim \Tilde{M}}\frac{A_{\tilde{\pi}_2}(m)}{m^{1/2}}U\Big(\big(Q/x\big)^{\frac{d_2}{d_2-1}}\, m^{\frac{1}{d_2-1}}\,M^{-1}\Big)e\Big(-(d_2-1) \left(\frac{Qm}{x}\right)^{\frac{1}{d_2-1}}\Big) dx +O(N^{-2025}),
\end{split} 
\end{equation*} 
where 
\begin{equation}\label{def of tilde N, M}
\frac{(N/H)^{d_1}N^{-\varepsilon}}{N} \ll \Tilde{N}\ll \frac{(N/H)^{d_1}N^\varepsilon}{N} \mbox{ and } \frac{(M/H)^{d_2}M^{-\varepsilon}}{M} \ll  \Tilde{M}\ll \frac{(M/H)^{d_2}M^\varepsilon}{M}.   
\end{equation}
We now set
\begin{equation*}
 G(x):=V\Big(x^{-\frac{d_1}{d_1-1}}\, n^{\frac{1}{d_1-1}}\,H^{-\frac{d_1}{d_1-1}}\,N^{-1}\Big) U\Big(x^{-\frac{d_2}{d_2-1}}\,m^{\frac{1}{d_2-1}}\,H^{-\frac{d_2}{d_2-1}}\,M^{-1}\Big).   
\end{equation*}
Since $n^{\frac{1}{d_1-1}}\,H^{-\frac{d_1}{d_1-1}}\,N^{-1}\asymp1$ and $m^{\frac{1}{d_2-1}}\,H^{-\frac{d_2}{d_2- 1}}\,M^{-1}\asymp1$, it follows that $G(x)$ is a compactly supported smooth function of $x$ whose derivatives are bounded. Consequently, the above becomes
\begin{equation*}
\begin{split}
B(d_1,d_2;H,N) &\ll \frac{(NM)^{1/2}(NM)^\varepsilon}{Q}\int\limits_{\mathbb{R}} B_0(x)g(1,x)\hat{W}\Big(\frac{xH}{Q}\Big) \sum\limits_{n\sim \Tilde{N}}\frac{A_{\tilde{\pi}_1}(n)}{n^{1/2}} e\Big((d_1-1) \left(\frac{Qn}{x}\right)^{\frac{1}{d_1-1}}\Big) \\
&\times \sum\limits_{m\sim \Tilde{M}}\frac{A_{\tilde{\pi}_2}(m)}{m^{1/2}} e\Big(-(d_2-1) \left(\frac{Qm}{x}\right)^{\frac{1}{d_2-1}}\Big)\, G\Big(\frac{xH}{Q}\Big)\, dx +O(N^{-2025}).
\end{split} 
\end{equation*} 
We may replace $g(1,x)$ by $1$ at the cost of a negligible error. Both $B_0(x)$ and $\hat{W}\!\left(\frac{xH}{Q}\right)$ regulate $x$, but the latter imposes the stronger restriction; 
hence we keep $\hat{W}\!\left(\frac{xH}{Q}\right)$ as the weight function. Therefore, we have
\begin{equation}\label{B(d_1,d_2;H,N) last estimate}
\begin{split}
B(d_1,d_2;H,N) &\ll\frac{(NM)^{1/2} (NM)^\varepsilon}{Q}\int\limits_{\mathbb{R}} \hat{W}\Big(\frac{xH}{Q}\Big) \sum\limits_{n\sim \Tilde{N}}\frac{A_{\tilde{\pi}_1}(n)}{n^{1/2}} e\Big((d_1-1) \left(\frac{Qn}{x}\right)^{\frac{1}{d_1-1}}\Big) \\
&\times \sum\limits_{m\sim \Tilde{M}}\frac{A_{\tilde{\pi}_2}(m)}{m^{1/2}} e\Big(-(d_2-1) \left(\frac{Qm}{x}\right)^{\frac{1}{d_2-1}}\Big) \, G\Big(\frac{xH}{Q}\Big)\, dx +O(N^{-2025}).
\end{split} 
\end{equation} 

\section{Proof of Theorem \ref{Main Th 1}}
In this section, we begin with the case in which $\pi_{1}$ and $\pi_{2}$ are
Hecke--Maass cusp forms for $\mathrm{SL}(d,\mathbb{Z})$, with $d=d_{1}=d_{2}\ge 4$, corresponding to the setting of our first main result. Consequently, we can simplify the expression $B(d,d;H,N)$ in \eqref{B(d_1,d_2;H,N) last estimate} as follows.
\begin{equation} \label{Th1: B(d_1,d_2;H,N) with x}
\begin{split}
B(d,d;H,N) &\ll\frac{(NM)^{1/2} (NM)^\varepsilon}{Q}\sum\limits_{n\sim \Tilde{N}}\sum\limits_{m\sim \Tilde{M}}\frac{A_{\tilde{\pi}_1}(n) A_{\tilde{\pi}_2}(m)}{\sqrt{mn}} I(\cdot) +O(N^{-2025}) ,
\end{split}
\end{equation}
where 
\begin{equation*}
I(\cdot)= \int\limits_{\mathbb{R}} \hat{W}\Big(\frac{xH}{Q}\Big) G\Big(\frac{xH}{Q}\Big)\, e\Big((d-1)\left(n^{\frac{1}{d-1}}- m^{\frac{1}{d-1}}\right) \Big(\frac{Q}{x}\Big)^{\frac{1}{d-1}}\Big) dx
\end{equation*}
and 
\begin{equation*}
\frac{(N/H)^{d}N^{-\varepsilon}}{N} \ll \Tilde{N}\ll \frac{(N/H)^{d}N^\varepsilon}{N} \mbox{ and } \frac{(M/H)^{d}M^{-\varepsilon}}{M} \ll  \Tilde{M}\ll \frac{(M/H)^{d}M^\varepsilon}{M}.   
\end{equation*}
Since $N\asymp M$, it follows that 
\begin{equation} \label{thm1: size tilde N , and M}
\tilde{N} \asymp \tilde{M} \asymp \frac{(N/H)^{d}}{N}.
\end{equation}
Making the change of variables $\tfrac{xH}{Q}=y$, we obtain
\begin{equation*}
I(\cdot)=\frac{Q}{H}\int\limits_{\mathbb{R}} \hat{W}(y) G(y)\, e\Big((d-1)\big(n^{\frac{1}{d-1}}- m^{\frac{1}{d-1}}\big)H^{\frac{1}{d-1}}\, y^{-\frac{1}{d-1}}\Big) dy.
\end{equation*}  
By repeated integration by parts, we observe that $I(\cdot)$ is negligibly small unless  $$(d-1)\big(n^{\frac{1}{d-1}}- m^{\frac{1}{d-1}}\big)H^{\frac{1}{d-1}}\ll N^{\varepsilon}.
$$
Equivalently, $I(\cdot)$ is negligibly small unless
\begin{equation} \label{def of L_0}
\begin{split}
m-n &\ll N^\varepsilon H^{-\frac{1}{d-1}}\tilde{N}^{\frac{d-2}{d-1}} \\
&\ll N^\varepsilon \cdot \frac{H}{N} \cdot \tilde{N}:= L_0,\\
\end{split}
\end{equation}
where we used the fact $\tilde{N}\asymp\tilde{M} \asymp \frac{(N/H)^d}{N}$ (see \eqref{thm1: size tilde N , and M}). 
Thus \eqref{Th1: B(d_1,d_2;H,N) with x} implies, up to a negligible error term,
\begin{equation*}
\begin{split}
B(d,d;H,N) &\ll \frac{(NM)^{1/2} (NM)^\varepsilon}{H} \, \sum\limits_{n\sim \Tilde{N}}\sum\limits_{\substack{m\sim \Tilde{M}\\ n-m\ll L_0}}\frac{A_{\tilde{\pi}_1}(n) A_{\tilde{\pi}_2}(m)}{\sqrt{mn}} +O(N^{-2025})\\
&\ll \frac{(NM)^{1/2} (NM)^\varepsilon}{H (\Tilde{N}\Tilde{M})^{1/2}} \, \sum\limits_{n\sim \Tilde{N}}\sum\limits_{\substack{m\sim \Tilde{M}\\ n-m\ll L_0}} A_{\tilde{\pi}_1}(n) A_{\tilde{\pi}_2}(m) +O(N^{-2025}). \\
\end{split}
\end{equation*}
Applying the Cauchy–Schwarz inequality, we obtain
\begin{equation*}
\begin{split}
B(d,d;H,N) &\ll  \frac{(NM)^{1/2} (NM)^\varepsilon}{H (\Tilde{N}\Tilde{M})^{1/2}} \, \Big(\sum\limits_{n\sim \Tilde{N}} |A_{\tilde{\pi}_1}(n)|^2\Big)^{1/2} \Big( \sum\limits_{n\sim \Tilde{N}} \Big|\sum\limits_{\substack{m\sim \Tilde{M}\\ n-m\ll L_0}} A_{\tilde{\pi}_2}(m) \Big|^2 \Big)^{1/2} \\
&\ll \frac{(NM)^{1/2} (NM)^\varepsilon L_0^{1/2}}{H (\Tilde{N}\Tilde{M})^{1/2}} \, \Big(\sum\limits_{n\sim \Tilde{N}} |A_{\tilde{\pi}_1}(n)|^2\Big)^{1/2} \Big( \sum\limits_{n\sim \Tilde{N}} \sum\limits_{\substack{m\sim \Tilde{M}\\ n-m\ll L_0}} |A_{\tilde{\pi}_2}(m)|^2 \Big)^{1/2}.
\end{split}
\end{equation*}
Using Lemma \ref{lemma Rama Bound} to $n$-sum and $m$-sum, we get 
\begin{equation*}
\begin{split}
B(d,d;H,N) &\ll \frac{(NM)^{1/2} (NM)^\varepsilon L_0^{1/2}}{H (\Tilde{N}\Tilde{M})^{1/2}} \, (L_0^{1/2+\varepsilon/2} \tilde{N}^{1+\varepsilon/2}).
\end{split}
\end{equation*}
Using $N\asymp M$ and $\tilde{N}\asymp\tilde{M}$, we arrive at
\begin{equation*}
B(d,d;H,N) \ll  \frac{NL_0^{1+\varepsilon} N^\varepsilon}{H}.
\end{equation*}
Recalling $L_0$ from \eqref{def of L_0}, and using $\tilde{N}\ll \frac{(N/H)^dN^{\varepsilon}}{N} $, we obtain
$$
B(d,d;H,N) \ll N^\varepsilon\cdot\frac{N}{H}\cdot \frac{(N/H)^d}{N} \cdot \frac{H}{N} = N^{\varepsilon} (N^{d-1}{H^{-d}}).
$$ 
This completes the proof of Theorem~\ref{Main Th 1}.

\section{Proof of Theorem \ref{Main Th 2}}
In this section, we consider the case in which $\pi_{1}$ and $\pi_{2}$ are Hecke--Maass cusp forms for $\mathrm{SL}(d_{1},\mathbb{Z})$, $\mathrm{SL}(d_{2},\mathbb{Z})$, respectively, with $d_1>d_2$ and $d_{1},d_{2}\ge 4$, 
corresponding to the setting of our second main result. Consequently, we can simplify the expression $B(d_1,d_2;H,N)$ in \eqref{B(d_1,d_2;H,N) last estimate} as follows.
\begin{equation} \label{2. B(d_1,d_2;H,N) voronoi}
B(d_1,d_2;H,N)\ll \frac{(NM)^{1/2+\varepsilon}}{Q}\sum\limits_{n\sim \Tilde{N}}\frac{A_{\tilde{\pi}_1}(n)}{\sqrt{n}} \sum\limits_{m\sim \Tilde{M}}\frac{A_{\tilde{\pi}_2}(m)}{\sqrt{m}} \mathcal{J}(\cdot)+O(N^{-2025}),
\end{equation}
where 
\begin{equation} \label{th2: J integral}
\mathcal{J}(\cdot):= \int\limits_{\mathbb{R}} \hat{W}\Big(\frac{xH}{Q}\Big) G\Big(\frac{xH}{Q}\Big)\, e\Big((d_1-1)\Big(\frac{Qn}{x}\Big)^{\frac{1}{d_1-1}}-(d_2-1) \Big(\frac{Qm}{x}\Big)^{\frac{1}{d_2-1}}\Big) dx
\end{equation}
and 
\begin{equation}\label{thm2: range N,M tilde}
\frac{(N/H)^{d_1}N^{-\varepsilon}}{N} \ll \Tilde{N}\ll \frac{(N/H)^{d_1}N^\varepsilon}{N} \mbox{ and } \frac{(M/H)^{d_2}M^{-\varepsilon}}{M} \ll  \Tilde{M}\ll \frac{(M/H)^{d_2}M^\varepsilon}{M}.   
\end{equation}
Making a change of variable $y=\frac{xH}{Q}$ in \eqref{th2: J integral}, we obtain
\begin{equation} \label{thm: J int}
\mathcal{J}(\cdot)= \frac{Q}{H}\int\limits_{\mathbb{R}} \hat{W}(y) G(y)\, e\Big((d_1-1)\Big(\frac{Hn}{y}\Big)^{\frac{1}{d_1-1}}-(d_2-1) \Big(\frac{Hm}{y}\Big)^{\frac{1}{d_2-1}}\Big) dy.  
\end{equation}
Here the phase function is given by
$$
f(y)=(d_1-1)\Big(\frac{Hn}{y}\Big)^{\frac{1}{d_1-1}}-(d_2-1) \Big(\frac{Hm}{y}\Big)^{\frac{1}{d_2-1}}.
$$
The derivatives of $f(y)$ are given by
$$
f'(y)=-\frac{1}{y}\Big(\frac{Hn}{y}\Big)^{\frac{1}{d_1-1}}+ \frac{1}{y}\Big(\frac{Hm}{y}\Big)^{\frac{1}{d_2-1}}
$$
and
$$
f''(y)=\frac{d_1}{d_1-1}(nH)^{\frac{1}{d_1-1}}y^{-\frac{2d_1-1}{d_1-1}}-\frac{d_2}{d_2-1}(mH)^{\frac{1}{d_2-1}}y^{-\frac{2d_2-1}{d_2-1}}.
$$
The stationary point of the phase function $f(y)$ is given by 
$$
y_0=H\Big(\frac{m^{d_1-1}}{n^{d_2-1}}\Big)^{\frac{1}{d_1-d_2}}.
$$
By repeated integration by parts, the integral in \eqref{thm: J int} is negligibly small unless $y_0 \asymp 1$. Putting $n\sim \tilde{N}$ and $m\sim \tilde{M}$, and recalling \eqref{thm2: range N,M tilde}, we observe that stationary point satisfies 
$$
y_0\asymp H \Big(\frac{M^{d_2-1}}{H^{d_2}}\Big)^{\frac{d_1-1}{d_1-d_2}} \Big(\frac{N^{d_1-1}}{H^{d_1}}\Big)^{-\frac{d_2-1}{d_1-d_2}} \asymp H \Big(\frac{N^{d_2-1}}{H^{d_2}}\Big)^{\frac{d_1-1}{d_1-d_2}} \Big(\frac{N^{d_1-1}}{H^{d_1}}\Big)^{-\frac{d_2-1}{d_1-d_2}} \asymp 1,
$$
which in particular lies in the support of $\hat{W}G$. Hence, by Lemma~\ref{Stationary2} with $X \asymp U_0 \asymp  U_1 \asymp R \asymp 1, \,\, Y \asymp N/H$, the expression in \eqref{th2: J integral} becomes
\begin{equation} \label{J(.) 0} 
\mathcal{J}(\cdot)\asymp \frac{Q}{H}\cdot\frac{e\left((d_1-d_2)\left(\frac{n}{m}\right)^{\frac{1}{d_1-d_2}}\right) W_1(y_0)}{\sqrt{f''(y_0)}}+ O(N^{-2026}),
\end{equation} 
where $W_1$ is a new compactly supported smooth function with bounded derivatives.
We further observe that 
\begin{equation*}
\begin{split}
f''(y_0)&=\frac{d_1}{d_1-1}(nH)^{\frac{1}{d_1-1}} \Big(H\Big(\frac{m^{d_1-1}}{n^{d_2-1}}\Big)^{\frac{1}{d_1-d_2}}\Big)^{-\frac{2d_1-1}{d_1-1}}-\frac{d_2}{d_2-1}(mH)^{\frac{1}{d_2-1}}\Big(H\Big(\frac{m^{d_1-1}}{n^{d_2-1}}\Big)^{\frac{1}{d_1-d_2}}\Big)^{-\frac{2d_2-1}{d_2-1}}\\
&=\frac{d_2-d_1}{(d_1-1)(d_2-1)} \, H^{-2} \, n^{\theta_1} \,m^{\theta_2},
\end{split}
\end{equation*}
where
\begin{equation} \label{theta_1,2}
\theta_1:= \frac{2d_2-1}{d_1-d _2} \quad \mbox{ and } \quad \theta_2:=-\frac{2d_1-1}{d_1-d_2}.
\end{equation}
Substituting this into \eqref{J(.) 0}, the expression becomes 
\begin{equation} \label{last J(.)}
\mathcal{J}(\cdot)\asymp Q \cdot\frac{e\Big((d_1-d_2)\big(\frac{n}{m}\big)^{\frac{1}{d_1-d_2}}\Big) W_1(y_0)}{n^{\theta_1/2}m^{\theta_2/2}}+ O(N^{-2026}).
\end{equation}
Combining \eqref{2. B(d_1,d_2;H,N) voronoi} and \eqref{last J(.)}, we obtain
\begin{equation}\label{2.B sum last}
B(d_1,d_2;H,N)\ll (NM)^{1/2+\varepsilon} \cdot \sum\limits_{n\sim \Tilde{N}}\frac{A_{\tilde{\pi}_1}(n)}{n^{\frac{\theta_1+1}{2}}} \sum\limits_{m\sim \Tilde{M}}\frac{A_{\tilde{\pi}_2}(m)}{m^{\frac{\theta_2+1}{2}}} e\Big((d_1-d_2)\left(\frac{n}{m}\right)^{\frac{1}{d_1-d_2}}\Big) +O(N^{-2025}).
\end{equation}

\begin{remark}
At this stage, after applying the Cauchy–Schwarz inequality and estimating trivially, followed by an application of Lemma \ref{lemma Rama Bound},  we get $B(d_1,d_2;H,N)\ll N^\varepsilon (N^{(d_1+d_2-1)/2} H^{-(d_1+d_2+1)/2} )$. Hence, we get a power saving bound of the sum $B(d_1,d_2;H,N)$, provided that $H\ge N^{1-\frac{4}{d_1+d_2+1}+\varepsilon}$. This lower bound for the shift $H$ obtained here does not go beyond the bound derived from the result of Friedlander and Iwaniec (see \eqref{Friedlander Iwaniec result}).
\end{remark}
To go beyond this barrier, we impose an additional assumption (without loss of generality) that the shift parameter $H$ lies below the threshold $N^{(d_{2}-1)/(d_{2}+1)}$. Under this assumption and the condition that $d_2\ge4$, it is easy to check that 
\begin{equation}\label{Th2: condition: H/N}
H< N^{\frac{d_2-2}{d_2-1}}<N^{\frac{d_1-2}{d_1-1}}.  
\end{equation}

Applying the Cauchy-Schwarz inequality to \eqref{2.B sum last}, we get
\begin{equation} \label{eq with omega}
 B(d_1,d_2;H,N)\ll (NM)^{1/2+\varepsilon} \cdot \frac{1}{\Tilde{N}^{\frac{\theta_1+1}{2}}} \Big(\sum\limits_{n\sim \Tilde{N}} |A_{\tilde{\pi}_1}(n)|^2\Big)^{1/2}\sqrt{\Upsilon} +O(N^{-2025}), 
\end{equation} 
where 
$$
\Upsilon:=\sum\limits_{n\sim \Tilde{N}} \Big| \sum\limits_{m\sim \Tilde{M}}\frac{A_{\tilde{\pi}_2}(m)}{m^{\frac{\theta_2+1}{2}}} e\Big((d_1-d_2)\left(\frac{n}{m}\right)^{\frac{1}{d_1-d_2}}\Big)\Big|^2.
$$
We now apply the Poisson summation formula to the $n$-sum. To do this, we plug in an approximate smooth bump function, say, $W_3$. Opening the absolute value square, we obtain 
\begin{equation}\label{omega sum}
\Upsilon= \sum\limits_{m_1\sim \Tilde{M}} \sum\limits_{m_2\sim \Tilde{M}}\frac{A_{\tilde{\pi}_2}(m_1)\overline{A_{\tilde{\pi}_2}(m_2)}}{(m_1m_2)^{\frac{\theta_2+1}{2}}}\Sigma,
\end{equation}
where 
$$
\Sigma= \sum\limits_{n\in\mathbb{Z}}  W_3\Big(\frac{n}{\Tilde{N}}\Big)e\Big((d_1-d_2)\Big(\frac{n}{m_1}\Big)^{\frac{1}{d_1-d_2}}- (d_1-d_2)\Big(\frac{n}{m_2}\Big)^{\frac{1}{d_1-d_2}}\Big).$$
Here the size of the phase function is $\asymp \frac{N}{H}$.
Apply the Poisson summation formula on the $n$-sum, we get
\begin{equation}\label{after poisson Sigma}
\Sigma= \sum\limits_{n\in \mathbb{Z}} \int\limits_{\mathbb{R}}W_3\Big(\frac{t}{\Tilde{N}}\Big) e\Big((d_1-d_2)t^{\frac{1}{d_1-d_2}}\big(m_1^{-\frac{1}{d_1-d_2}}- m_2^{-\frac{1}{d_1-d_2}}\big)-nt\Big)dt.    
\end{equation}
Changing the variable $t=t_1 \Tilde{N}$, we have
\begin{equation} \label{th 2: after poisson on n sum}
\Sigma= \Tilde{N}\sum\limits_{n\in \mathbb{Z}} \int\limits_{\mathbb{R}}W_3(t_1) e\Big((d_1-d_2) (t_1\Tilde{N})^{\frac{1}{d_1-d_2}}\big(m_1^{-\frac{1}{d_1-d_2}}- m_2^{-\frac{1}{d_1-d_2}}\big)-nt_1 \Tilde{N}\Big)dt_1.  
\end{equation}
The condition \eqref{Th2: condition: H/N} forces $\Tilde{N}\gg\frac{N^{d_1-1}N^{-\varepsilon}}{H^{d_1}}>N/H$. By repeated integration by parts, we see that only the zero-frequency (i.e., $n=0$) contributes significantly.  
\subsection{The zero frequency} 
Putting $n=0$ in \eqref{th 2: after poisson on n sum}, we have 
\begin{equation*}
\Sigma=\Tilde{N}\int\limits_{\mathbb{R}}W(t_1) e\Big((d_1-d_2)(t_1 \Tilde{N})^{\frac{1}{d_1-d_2}}\big(m_1^{-\frac{1}{d_1-d_2}}- m_2^{-\frac{1}{d_1-d_2}}\big)\Big)dt_1.  
\end{equation*} 
By repeated integration by parts, we observe that $\Sigma$ is negligibly small unless 
$$
\Tilde{N}^{\frac{1}{d_1-d_2}}\big(m_1^{-\frac{1}{d_1-d_2}}- m_2^{-\frac{1}{d_1-d_2}}\big)\ll N^\varepsilon.
$$
Equivalently, $\Sigma$ is negligibly small unless
\begin{equation}\label{K_0 defini}
\begin{split} 
m_1-m_2 &\ll N^{\varepsilon} \cdot \Tilde{N}^{-\frac{1}{d_1-d_2}} \Tilde{M}^{1+\frac{1}{d_1-d_2}} \\
& \ll N^{\varepsilon} \cdot \Big(\frac{N^{d_1-1}}{H^{d_1}}\Big)^{-\frac{1}{d_1-d_2}} \Big(\frac{N^{d_2-1}}{H^{d_2}}\Big)^{\frac{1}{d_1-d_2}} \Tilde{M}\\
&  \ll N^{\varepsilon} \cdot \frac{H}{N} \cdot \Tilde{M}:=K_0,
\end{split}
\end{equation}
where we used \eqref{thm2: range N,M tilde}.
Thus \eqref{omega sum} implies, up to a negligible error term
\begin{equation*} 
\Upsilon= \Tilde{N} \sum\limits_{m_1\sim \Tilde{M}} \sum\limits_{\substack{m_2\sim \Tilde{M}\\ m_1-m_2 \ll K_0 }}\frac{A_{\tilde{\pi}_2}(m_1)\overline{A_{\tilde{\pi}_2}(m_2)}}{(m_1m_2)^{\frac{\theta_2+1}{2}}}+O(N^{-2025}). 
\end{equation*} 
Applying the Cauchy–Schwarz inequality, we obtain
\begin{equation*}
\begin{split}
\Upsilon  &\ll \frac{\Tilde{N} }{\Tilde{M}^{(\theta_2+1)}} \Big(\sum\limits_{m_1\sim \Tilde{M}} |A_{\tilde{\pi}_2}(m_1)|^2\Big)^{1/2}\Big(\sum\limits_{m_1\sim \Tilde{M}} \big| \sum\limits_{\substack{m_2\sim \Tilde{M}\\ m_1-m_2 \ll K_0 }}A_{\tilde{\pi}_2}(m_2)\big|^2\Big)^{1/2} \\
&\ll  \frac{\Tilde{N} K_0^{1/2}}{\Tilde{M}^{(\theta_2+1)}} \Big(\sum\limits_{m_1\sim \Tilde{M}} |A_{\tilde{\pi}_2}(m_1)|^2\Big)^{1/2}\Big( \sum\limits_{m_1\sim \Tilde{M}} \sum\limits_{\substack{m_2\sim \Tilde{M}\\ m_1-m_2 \ll K_0 }}|A_{\tilde{\pi}_2}(m_2)|^2\Big)^{1/2}.
\end{split}
\end{equation*}   
Applying Lemma \ref{lemma Rama Bound} to both the $m_1$- and $m_2$-sums, we get 
\begin{equation} \label{upsilon bdd last}
\begin{split}
\Upsilon &\ll \frac{\Tilde{N} K_0^{1/2}}{\Tilde{M}^{(\theta_2+1)}} (\tilde{M}^{1+\varepsilon/2}K_0^{1/2+\varepsilon/2})\\
& = \tilde{N} K_0^{1+\varepsilon/2} \tilde{M}^{-\theta_2+\varepsilon/2}.
\end{split}
\end{equation} 
Putting \eqref{upsilon bdd last} into \eqref{eq with omega}, and applying Lemma~\ref{lemma Rama Bound} to the $n$-sum in \eqref{eq with omega}, we obtain  
\begin{equation*}
\begin{split}
B(d_1,d_2;H,N) &\ll N^{\varepsilon} (NM)^{1/2} \cdot \frac{\tilde{N}^{1/2}}{\tilde{N}^{(\theta_1+1)/2}} \cdot (\tilde{N} K_0^{1+\varepsilon/2} \tilde{M}^{-\theta_2+\varepsilon/2})^{1/2}   \\
& =N^\varepsilon (NM)^{1/2} \cdot \tilde{N}^{-\theta_1/2}\tilde{M}^{-\theta_2/2} \tilde{N}^{1/2}K_0^{1/2+\varepsilon/4}.
\end{split}
\end{equation*} 
Recalling $K_0$ from \eqref{K_0 defini}, we obtain
\begin{equation} \label{last B}
B(d_1,d_2;H,N)\ll  N^\varepsilon (NM)^{1/2} \cdot \tilde{N}^{-\theta_1/2}\tilde{M}^{-\theta_2/2} \tilde{N}^{1/2} \tilde{M}^{1/2} (H/N)^{1/2}.
\end{equation}
Substituting the values of $\theta_1$ and $\theta_2$ from \eqref{theta_1,2}, recalling \eqref{thm2: range N,M tilde}, and using $N \asymp M$, we obtain 
\begin{equation} \label{bdd1}
\tilde{N}^{-\theta_1/2}\tilde{M}^{-\theta_2/2} \asymp  (NH)^{-1/2}
\end{equation}
and
\begin{equation} \label{bdd2}
\tilde{N}^{1/2} \tilde{M}^{1/2}\asymp N^{(d_1+d_2-2)/2}H^{-(d_1+d_2)/2}.
\end{equation}
Combining \eqref{last B}, \eqref{bdd1}, and \eqref{bdd2}, we deduce
\begin{equation*}
B(d_1,d_2;H,N)\ll  N^\varepsilon N^{\frac{(d_1+d_2-2)}{2}}H^{-\frac{(d_1+d_2)}{2}}.
\end{equation*}
This completes the proof of Theorem~\ref{Main Th 2}. \\

{\bf Acknowledgements.} The author is grateful to Professor Ritabrata Munshi for suggesting the problem and for helpful discussions. The author also gratefully acknowledges the Indian Statistical Institute, Kolkata, for its excellent research environment and support through the Research Associate Fellowship. \\

\end{document}